\documentclass[reqno, 11pt]{amsart}
\usepackage{amssymb}
\usepackage[numbers]{natbib}
\pdfoutput=1

%%%
%%%%% This is only used to create the fast running format file:
%\input{umsa.fd}\input{umsb.fd}\input{omscmr.fd}
%%%
%\dump
%%%% For printing use
\usepackage[nesting]{hyperref}
%%
%%%% For screen viewing use
%\usepackage[colorlinks,anchorcolor=cyan,pagebackref,nesting]{hyperref}
%\usepackage[colorlinks,anchorcolor=cyan]{hyperref}

%\usepackage[T1]{fontenc}

%\def\backref#1{\emph{[#1]}}
%\def\href#1#2{#2}

\usepackage[pdftex]{graphicx}
\usepackage{listings}
\usepackage{multirow}
\usepackage{placeins}
\usepackage{color}
\usepackage{subfigure}
\usepackage{lscape}

% Use these offsets when printing with 120 percent
%\hoffset=-2.05cm \voffset=-1.5cm

\textheight=24cm \textwidth = 16cm \topmargin= -1cm \oddsidemargin
0mm \evensidemargin 0mm

\newcommand{\BlackBoxes}{\global\overfullrule5pt}

\BlackBoxes

\newcommand{\R}{\mathbb{R}}
\newcommand{\N}{\mathbb{N}}

\newcommand{\E}{\mathbb{E}}

\newcommand{\PP}{\mathbb{P}}
\newcommand{\Eop}{\operatorname{\mathbb{\E}}}

\newcommand{\Pop}{\operatorname{\mathbb{\PP}}}

\renewcommand{\P}{\mathbb{P}}

\newtheorem{theorem}{Theorem}

\newtheorem{lemma}[theorem]{Lemma}

\theoremstyle{definition}
\newtheorem{example}[theorem]{Example}
\newtheorem{remark}[theorem]{Remark}

\numberwithin{equation}{section} \numberwithin{theorem}{section}

%Good point to hyphenate
\def\0{\kern0pt\-\nobreak\hskip0pt\relax}

\makeatletter
% For a convenient note at the top of the first page
\AtBeginDocument{%
 \def\@serieslogo{%
 \vbox to\headheight{%
 \parindent\z@ \fontsize{6}{7\p@}\selectfont
%\today $\;$ NB\endgraf Draft, don't distribute
 \vss}}}

\def\makeoverbar#1#2#3#4#5#6#7{%
 \setbox0=\hbox{$\m@th#2\mkern#5mu{{}#3{}}\mkern#6mu$}%
 \setbox1=\null \dimen@=#4\fontdimen8#13 \dimen@=3.5\dimen@
 \advance\dimen@ by \ht0 \dimen@=-#7\dimen@ \advance\dimen@ by \wd0
 \ht1=\ht0 \dp1=\dp0 \wd1=\dimen@
 \dimen@=\fontdimen8#13 \fontdimen8#13=#4\fontdimen8#13
 \rlap{\hbox to \wd0{$\m@th\hss#2{\overline{\box1}}\mkern#5mu$}}
 \fontdimen8#13=\dimen@}

\def\mylabel#1#2{{\def\@currentlabel{#2}\label{#1}}}

\makeatother

\begin{document}

%\listoftodos

\makeatletter \providecommand\@dotsep{5} \makeatother
%\listoftodos[Changes in Orange/Red To Do List in Green / Blue]\relax

\title[Risk-Sensitive Stopping Problems for Continuous-Time Markov Chains]{Risk-Sensitive Stopping Problems for Continuous-Time Markov Chains}

\author[N. \smash{B\"auerle}]{Nicole B\"auerle${}^*$}
\address[N. B\"auerle]{Institute for Stochastics,
Karlsruhe Institute of Technology, D-76128 Karlsruhe, Germany}

\email{\href{mailto:nicole.baeuerle@kit.edu}
{nicole.baeuerle@kit.edu}}

%\thanks{${}^\ddag$ The underlying projects have been funded by the Bundesministerium f\"ur Bildung und
%Forschung of Germany under promotional reference 03BAPAC1. The
%authors are responsible for the content of this article.}

\author[A. \smash{Popp}]{Anton Popp$^\ddag$}
\address[A. Popp]{Institute for Stochastics,
Karlsruhe Institute of Technology, D-76128 Karlsruhe, Germany}

\email{\href{mailto:anton.popp@kit.edu} {anton.popp@kit.edu}}

\maketitle

\begin{abstract}
In this paper we consider stopping problems for continuous-time Markov chains under a general risk-sensitive optimization criterion for problems with finite and infinite time horizon. More precisely our aim is to maximize the certainty equivalent of the stopping reward minus cost over the time horizon. We derive optimality equations for the value functions and prove the existence of optimal stopping times. The exponential utility is treated as a special case.  In contrast to risk-neutral stopping problems it may be optimal to stop between jumps of the Markov chain. We briefly discuss the influence of the risk sensitivity on the optimal stopping time and consider a special house selling problem as an example.
\end{abstract}

\vspace{0.5cm}
\begin{minipage}{14cm}
{\small
\begin{description}
\item[\rm \textsc{ Key words}]
{\small Markov Decision Problem, Risk-aversion,  Certainty Equivalent, Exponential Utility.}
\item[\rm \textsc{AMS subject classifications}] {\small 60J27,90C40. }
\end{description}
}
\end{minipage}

\section{Introduction}\label{sec:intro}\noindent
In this paper we consider stopping problems for continuous-time Markov chains under a general risk-sensitive optimization criterion for problems with finite and infinite time horizon. More precisely our aim is to maximize the certainty equivalent of the stopping reward over the time horizon. We assume that we have cost as long as we do not stop. The certainty equivalent of a random variable is defined by $U^{-1}(\Eop U(X))$ where $U$ is an increasing concave function. If $U(x)=x$ we obtain as a special case the classical risk-neutral decision maker. The case $U(x)=-e^{-\gamma x }, \gamma>0$ is often referred to as 'risk-sensitive', however the risk-sensitivity is here only expressed in a special way through the risk-sensitivity parameter $\gamma\neq 0$.  More general, the certainty equivalent may be written (assuming enough regularity of $U$) as
\begin{equation}\label{BReq:Urep} U^{-1}\Big(\Eop\big[U(X)\big]\Big)\approx \Eop X - \frac12 l_U(\Eop X) Var[X]\end{equation}
where \begin{equation}\label{BReq:AP}  l_U(x) = -\frac{U''(x)}{U'(x)}\end{equation} is the {\em Arrow-Pratt} function of absolute risk aversion. In case of an exponential utility, this absolute risk aversion $l_U(x)=\gamma$ is constant (for a discussion see \cite{bpli03}). In contrast to the classical risk-neutral situation where a uniformization of the Markov chain immediately leads to the observation that optimal stopping time points can only be jump time points of the continuous-time Markov chain, this is no longer true in our setting with general utility function. We give an explicit example where it is optimal to stop between jumps.

Stopping problems with general utility functions are rarely treated in the literature. We are only aware of some papers considering the problem in discrete time. \cite{m00} considers the classical house selling problem with general utility in a discrete time setting. In a separate section we treat a continuous time version of the house selling problem. We show that some of the results in \cite{m00} also extend to our case but in general the problem is different. \cite{Kadota1996,Kadota2001} consider stopping problems with denumerable state space and arbitrary utility function. The authors there discuss the so-called {\em monotone case} and give conditions for the optimality of {\em one-step-look-ahead rules}. In \cite{br15} risk-sensitive stopping problems with general utility are considered in a partially observable setting. Optimality equations, examples and risk-sensitivity results are considered there.

Of course the stopping problems we treat here can be seen as a special case of risk-sensitive continuous-time Markov Decision Processes. The theory for these type of problems with an exponential utility has been treated in \cite{Ghosh2014}. There both finite and infinite time horizon problems are considered and the value function is characterized via the HJB equation and an optimal Markov control is obtained. The infinite horizon average cost case is also considered. In \cite{wei} the author studies continuous-time Markov decision processes under the risk-sensitive finite-horizon cost criterion with the exponential utility. Suitable optimality conditions are given and a Feynman Kac formula is established, via which the existence and uniqueness of the solution to the optimality equation and the existence of an optimal deterministic Markov policy are obtained. However, in our paper we will see that the exponential utility case is always special and often behaves as the risk-neutral case.

Risk-sensitive Markov Decision Processes in discrete-time with arbitrary utility functions have been considered in \cite{br11}. There optimality equations for finite and infinite time horizon problems can be found as well as results about the existence of optimal policies. For a specific application to a dividend problem see \cite{bj14}.

The paper is organized as follows: First we will introduce the risk-sensitive stopping problem together with some integrability and regularity assumptions. Then we characterize the feasible stopping times which leads to a formulation with the help of decision rules and which allows a recursive solution. Then we consider
risk-sensitive stopping problems with a finite time horizon. By finite time horizon we mean that one latest has to stop after the $n$-th jump. We consider both problems where the utility function has domain $\R$ like e.g. in the exponential case and where the utility function has restricted domain, like e.g. $U(x)=\sqrt{x}$ or $U(x)=\ln(x)$.  We derive a recursive algorithm to compute the value function and the optimal stopping time. An example with logarithmic utility shows that it may be optimal to stop between jumps. In the case of an exponential utility function however the optimality equation simplifies and it is possible to show that optimal stopping times are restricted to the jump time points of the continuous-time Markov chain. In section 5 we consider the  risk-sensitive stopping problem with infinite time horizon. We show that the value function satisfies a fixed point equation and give conditions under which a maximizer of this equation defines an optimal stopping time. Again results simplify in the exponential utility case. Then we give sufficient conditions in the general utility case which imply that it is optimal to stop directly after a jump. These conditions can be interpreted as one-step look ahead rules in the case of an exponential utility. In section 7 we shortly discuss the influence of risk aversion on the optimal stopping time. It will turn out that more risk averse decision makers will not stop earlier. Finally in the last section we will consider a special house selling problem where we can show a monotonicity property of the optimal stopping time.

\section{Risk-Sensitive Stopping Problems}\label{sec:intro}\noindent
We suppose that a {\em continuous-time Markov chain} $(X_t)$ with countable state space $S$ and intensity matrix $Q=(q_{ij})_{i,j\in S}$ is given. For simplicity it is assumed that $$0<  q_i := -q_{ii} = \sum_{j\neq i} q_{ij} < \infty$$ i.e. the Markov chain is conservative and has no absorbing states. The underlying probability space is $(\Omega,\mathcal{F},\Pop)$. Trajectories are assumed to be right-continuous. We denote by $0=: S_0< S_1 < S_2<\ldots$ the random jump time points of the Markov chain and by $(Z_n)$ the embedded process, i.e. $Z_n = X_{S_n}$. Thus, we can represent the Markov chain by
$$ X_t = \sum_{k=0}^\infty Z_k \cdot 1_{\{S_k \le t < S_{k+1}\}},\quad t\ge 0.$$ In particular $Z_0=X_0$. The natural filtration which is generated by this process is denoted by $(\mathcal{F}_t^X)$ with $\mathcal{F}_t^X := \sigma\big(X_s, s\le t\big)$. It is well-known that $S_{k+1}-S_k \sim\exp(q_{Z_k})$ and that the transition probabilities for the embedded Markov chain are given by $$\Pop(Z_{k+1}=j | Z_k=i)=\frac{q_{ij}}{q_i}$$ for $j\neq i$ and $\Pop(Z_{k+1}=i | Z_k=i)=0$ (see e.g. \cite{Bremaud1999}).

Next suppose a utility function $U : dom(U)\to \R$ is given, i.e. $U$ is strictly increasing, strictly concave and $dom(U)=[d,\infty)$, $dom(U) = (d,\infty)$ or $dom(U)=\R$ where $dom(U)$ denotes the domain of the utility function and $d\in\R$ is a constant. We can extend $U$ on $\R$ by setting
$$ \hat{U}(x) := \left\{\begin{array}{cl}
   -\infty & , x\notin dom(U)\\
   U(x) &, x\in dom(U).
   \end{array}\right.
   $$
For simplicity we will still denote this function by $U$. Next, there is a measurable reward function $g:S\to \R$ and a cost rate $c>0$. We denote by
$$ \Sigma := \{\tau :  \Omega\to [0,\infty) \; |\; \tau \mbox{ is an } (\mathcal{F}_t^X)-\mbox{stopping time with } \Pop_i(\tau<\infty)=1, \mbox{ for } i\in S\}$$
where $\Pop_i(\cdot)$ is the conditional probability measure given $X_0=i$. The aim is to solve the stopping problem
\begin{equation}\label{eq:problem} \sup_{\tau \in \Sigma} \Eop_i\big[ U\big(g(X_\tau)-c\tau\big)\big].\end{equation}
In order to obtain a well-defined problem we make the following assumptions:
\begin{itemize}
  \item[(A1)] $\sup_{\tau \in \Sigma} \Eop_i\big[g^+(X_\tau)-c\tau\big] < \infty,$ $i\in S$.
  \item[(A2)] $\liminf_{n\to\infty} \Eop_i\big[ U\big(g(X_{\tau\wedge S_n})-c(t+\tau\wedge S_n)\big)\big]\ge \Eop_i\big[ U\big(g(X_{\tau})-c(t+\tau)\big)\big]$ for all $i\in S, t\ge 0$ and $\tau\in\Sigma$.
\end{itemize}

\section{Characterization of Stopping-Times}
Before we tackle the stopping problem, let us consider in more detail the stopping times. It turns out that stopping times in $\Sigma$ can be decomposed into a sequence of measurable mappings. This observation has already been used in similar settings by  \cite{davis93} and \cite{Pham2010}.
%In what follows we denote by $(X_t^{S_n})$ the Markov Chain which is stopped after the $n$-th jump, i.e. $X_t^{S_n}:= X_{t\wedge S_n}$. These stopped processes then generate the filtrations $(\mathcal{F}_t^{n,X})$ with $\mathcal{F}_t^{n,X} := \sigma\big(X_s^{S_n}, 0\le s\le t\big)$. We also set $\mathcal{F}_t^{\infty,X}:= \mathcal{F}_t^{X}$.
The following theorem can be seen as a special case of Theorem 2.1 in \cite{Bayraktar2014}:

\begin{theorem}\label{thm:decomposition-n-step}
Let $\tau :\Omega\to[0,\infty)$ be a measurable mapping with $\Pop_i(\tau <\infty)=1$ for $i\in S$. Then $\tau$ is an $(\mathcal{F}^{X}_t)$-stopping time, if and only if it has the following decomposition:
\begin{equation}\label{eq:decomposition-n-step}
   \tau = \tau^0 1_{\{\tau < S_1\}} + \sum_{k=1}^{\infty} \tau^k 1_{\{S_k\le \tau < S_{k+1}\}},\quad \Pop-a.s.
\end{equation}
where for every $k\in\N_0$:
\begin{enumerate}
\item[(i)]   $\tau^k \ge S_k$,
\item[(ii)]  %$\tau^k$ is an $(\cF^{k,X}_t)_{t\ge 0}$--stopping time,
 there exists a measurable mapping $h_k : [0,\infty)^{k+1} \times S^{k+1} \to [0,\infty]$, such that $h_k \ge 0$ and
                 \begin{equation}\label{eq:decomposition-stopping-rule-measurable-mapping}
                   \tau^k = h_k (S_0,\dots,S_k,Z_0,\dots,Z_k) + S_k.
                 \end{equation}
  \end{enumerate}
This decomposition \eqref{eq:decomposition-n-step} is unique in the sense that every term in the sum of \eqref{eq:decomposition-n-step} is $\Pop$-a.s. uniquely determined on the set $\{S_k\le\tau <S_{k+1}\}$.
\end{theorem}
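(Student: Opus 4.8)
The plan is to reduce everything to the identification of the stopped $\sigma$-algebras and to one structural fact about the internal history of a jump process. Concretely, I would first record the standard identity $\mathcal{F}_{S_k}^X=\sigma(S_0,\dots,S_k,Z_0,\dots,Z_k)$, which is exactly what converts $\mathcal{F}_{S_k}^X$-measurability into a representation through a measurable map $h_k$ by Doob--Dynkin. Writing $N_t:=\sum_{k\ge 1}1_{\{S_k\le t\}}$, so that $\{N_t=k\}=\{S_k\le t<S_{k+1}\}\in\mathcal{F}_t^X$, the whole argument rests on the \emph{inter-jump lemma}: for every $t\ge 0$ and every $F\in\mathcal{F}_t^X$ there is $G\in\mathcal{F}_{S_k}^X$ with $F\cap\{N_t=k\}=G\cap\{N_t=k\}$, because on $\{S_k\le t<S_{k+1}\}$ the trajectory $(X_s)_{s\le t}$ is constant after $S_k$ and therefore carries no information past the $k$-th jump. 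This is the step I expect to be the main obstacle, since it is where the point-process structure (equivalently, the strong Markov property together with the memorylessness of the waiting times) genuinely enters.

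For the nontrivial implication, assume $\tau$ is a stopping time. I would treat the base case $k=0$ first: on $\{t<S_1\}$ the path equals $X_0$, so $\mathcal{F}_t^X$ is, under $\Pop_i$, trivial there; hence $\{\tau\le t\}\cap\{\tau<S_1\}$ is $\Pop_i$-a.s.\ trivial, and letting $t$ vary shows that $\tau$ is $\Pop_i$-a.s.\ constant on $\{\tau<S_1\}$, i.e.\ $\tau^0=h_0(Z_0)$ with $h_0\ge 0$. The general $k$ then follows either by applying the inter-jump lemma to $\{\tau\le t\}$ and taking a limit over rational $t$ to produce an $\mathcal{F}_{S_k}^X$-measurable random variable agreeing with $\tau$ on $\{S_k\le\tau<S_{k+1}\}$, or, more transparently, by induction using the strong Markov property: shifting the chain at $S_1$ yields a fresh chain started in $Z_1$, and $(\tau-S_1)1_{\{\tau\ge S_1\}}$ is a stopping time for the shifted filtration, so the induction hypothesis applied after the first jump produces $\tau^k=h_k(S_0,\dots,S_k,Z_0,\dots,Z_k)+S_k$ on $\{S_k\le\tau<S_{k+1}\}$. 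Property (i) is automatic since $\tau\ge S_k$ on that set, and replacing $h_k$ by $h_k^+$ secures $h_k\ge 0$ without altering $\tau$ there.

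For the converse I would read the decomposition recursively as a system of \emph{decision rules}: at the $k$-th jump one plans to stop at the $\mathcal{F}_{S_k}^X$-measurable time $\tau^k=h_k(\dots)+S_k\ge S_k$ unless a jump intervenes, so that $\tau=\tau^{\kappa}$ with $\kappa:=\min\{k:\tau^k<S_{k+1}\}$. Fixing $t$ and decomposing $\{\tau\le t\}=\bigcup_{k\ge 0}(\{\tau\le t\}\cap\{N_t=k\})$, on $\{N_t=k\}$ the data $S_0,\dots,S_k,Z_0,\dots,Z_k$, and hence all $\tau^j$ with $j\le k$, are $\mathcal{F}_t^X$-measurable; moreover $\kappa>k$ would force $\tau\ge S_{k+1}>t$, so $\{\tau\le t\}\cap\{N_t=k\}=\{\kappa\le k,\ \tau^\kappa\le t\}\cap\{N_t=k\}$. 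For $j<k$ the test $\{\tau^j<S_{j+1}\}$ uses only $S_{j+1}\le S_k\le t$, while for the active interval $j=k$ the surviving constraint $\tau^k\le t$ already implies $\tau^k<S_{k+1}$ because $t<S_{k+1}$; thus this event is a finite union of $\mathcal{F}_t^X$-measurable sets, whence $\tau$ is a stopping time. Uniqueness is then immediate: on $\{S_k\le\tau<S_{k+1}\}$ one has $\tau=\tau^k$ by \eqref{eq:decomposition-n-step}, so each summand is pinned down there, the freedom in $h_k$ off that set being exactly what the uniqueness clause permits.
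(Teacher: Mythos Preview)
The paper does not supply its own proof of this theorem; it simply records that the result ``can be seen as a special case of Theorem~2.1 in \cite{Bayraktar2014}'' and refers the reader there (the same decomposition is also attributed to \cite{davis93} and \cite{Pham2010}). So there is no in-paper argument against which to compare your proposal.

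Judged on its own, your plan is sound and follows the classical route for stopping times of marked point processes. The identification $\mathcal{F}_{S_k}^X=\sigma(S_0,\dots,S_k,Z_0,\dots,Z_k)$ together with your inter-jump lemma is precisely the structural input one finds in \cite{davis93} or \cite{Bremaud1999}; Doob--Dynkin then yields the representation~\eqref{eq:decomposition-stopping-rule-measurable-mapping}. Your treatment of the converse via $\{\tau\le t\}=\bigcup_{k\ge 0}(\{\tau\le t\}\cap\{N_t=k\})$, together with the observation that on $\{N_t=k\}$ only $(S_0,\dots,S_k,Z_0,\dots,Z_k)$ and hence $(\tau^0,\dots,\tau^k)$ are needed, and that the residual constraint $\tau^k\le t<S_{k+1}$ automatically enforces $\tau^k<S_{k+1}$, is correct and clean.

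Two points deserve tightening. First, in the base case you infer from the $\Pop_i$-triviality of $\mathcal{F}_t^X$ on $\{t<S_1\}$ that $\tau$ is constant on $\{\tau<S_1\}$; to pass from the family of trivial events $\{\tau\le t\}\cap\{t<S_1\}$ to a \emph{single} deterministic value $h_0(i)$ you still need a monotone limit argument (your ``rational $t$'' remark) or a direct appeal to the inter-jump lemma at $k=0$, so spell out whichever you choose. Second, the alternative route via the strong Markov shift is heuristically fine but not an independent proof: the shifted stopping time $(\tau-S_1)^+$ is a stopping time for $(\mathcal{F}^X_{S_1+t})_{t\ge 0}$, which strictly contains the internal filtration of the post-$S_1$ chain by the extra information $\mathcal{F}_{S_1}^X$; disentangling this initial enlargement is exactly the content of the inter-jump lemma again. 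Neither point is a gap---your outline compiles into a correct proof, and it is in substance the argument underlying the references the paper invokes.
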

In the next sections we will restrict to {\em Markovian stopping times}. By Markovian we mean that the functions $h_k(S_1,\dots,S_k,Z_0,\dots,Z_k)$ in the decomposition depend only on the current state of the Markov chain and the total time elapsed so far, i.e.  $h_k(S_k,Z_k)$. We denote this class of $(\mathcal{F}^{X}_t)$-stopping times by $\Sigma^M$. This assumption is made to ease the presentation. Indeed it can be shown that the optimal stopping time for problem \eqref{eq:problem} can be found among the Markovian stopping times (for more details see \cite{popp16}). In what follows we will identify $\tau\in \Sigma^M$ with the sequence $\tau=(h_0,h_1,\ldots)$ with measurable $h_k : [0,\infty)\times S\to \R_+$.

\section{Finite Horizon Problems}\label{sec:finiteh}
In this section we will first consider stopping problems with a finite time horizon. By finite time horizon we mean that one has to stop  latest at time $S_n$ when the $n$-th jump occurs. Moreover, we assume now that the process already has a 'history' of $t$ time units where we did not stop, i.e for $\tau \in \Sigma^M$ and $(t,i)\in[0,\infty)\times S$ let
\begin{eqnarray}
% \nonumber to remove numbering (before each equation)
\nonumber  V_n(t,i,\tau)  &:=& \Eop_i\Big[ U\big(g(X_{\tau\wedge S_n})-c(t+\tau\wedge S_n)\big)\Big],  \\
\label{eq:problemfinite}  V_n(t,i) &:=& \sup_{\tau \in \Sigma^M} V_n(t,i,\tau).
\end{eqnarray}
Here we interpret stopping times $\tau=(h_0,h_1,\ldots)\in\Sigma^M$ as in \eqref{eq:decomposition-n-step} with $\tau^k = h_k(t+S_k,Z_k)+S_k, k\in\N_0$. In particular $\tau^0=h_0(t,Z_0)$.
Due to assumption (A1) $V_n(t,i)<\infty$ is well-defined because a utility function can be bounded from above by a linear function. Moreover, it follows directly from the monotonicity of $U$ that $t\mapsto V_n(t,i)$ is decreasing for all $i\in S$ and all $n\in\N$. We are interested in finding $V_n(0,i)$.

\subsection{Reward Iteration}
In this section we note that for a given stopping time $\tau \in \Sigma^M$, the corresponding value $V_n(t,i,\tau)$ can be computed recursively. In order to formulate this statement let for $\tau\in\Sigma^M$ with $\tau=(h_0,h_1,\ldots)$ the stopping time $\overrightarrow{\tau}$ be defined by $\overrightarrow{\tau}=(h_1,h_2,\ldots)\in\Sigma^M$. Then we obtain:

\begin{theorem}\label{thm:reward-iteration}
Let $\tau \in \Sigma^M$ with $ \tau=(h_0,h_1,\ldots)$. We have $ V_0(t,i,\tau) = U\big(g(i)-ct \big)$ and the following {\em reward iteration} holds for $k=0,\ldots n-1$:
  \begin{equation}\label{eq:reward-iteration}
    V_{k+1}(t,i,\tau) = U\big(g(i)-c(t+h_0(t,i)) \big) e^{-q_i h_0(t,i)}
           + \int_0^{h_0(t,i)}e^{-q_i s} \sum_{j\neq i} q_{ij} V_k(t+s,j,\overrightarrow{\tau}) \;ds.
  \end{equation}
\end{theorem}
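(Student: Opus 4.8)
The plan is to prove the reward iteration \eqref{eq:reward-iteration} by conditioning on the first jump time $S_1$ and the post-jump state $Z_1$, exploiting the Markov property of the embedded chain together with the explicit exponential law of the interjump times. I would proceed by induction on $k$; the base case $V_0(t,i,\tau)=U(g(i)-ct)$ is immediate, since with zero jumps allowed we are forced to stop at time $0$ (the $n=0$ horizon), so $\tau\wedge S_0=0$ and $X_0=i$, giving $U(g(i)-c(t+0))=U(g(i)-ct)$.

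For the inductive step, fix $\tau=(h_0,h_1,\ldots)\in\Sigma^M$ and write $\alpha:=h_0(t,i)$ for the deterministic stopping threshold chosen in the current state $i$ with accumulated history $t$. The decision rule $h_0$ instructs us to stop at the deterministic time $\alpha$ after entering state $i$, \emph{unless} the chain jumps first at $S_1<\alpha$, in which case the continuation stopping time $\overrightarrow{\tau}=(h_1,h_2,\ldots)$ takes over from the new state $Z_1$ with updated history $t+S_1$. I would therefore split the expectation $V_{k+1}(t,i,\tau)=\Eop_i[U(g(X_{\tau\wedge S_{k+1}})-c(t+\tau\wedge S_{k+1}))]$ according to the two disjoint events $\{S_1\ge\alpha\}$ and $\{S_1<\alpha\}$. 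On $\{S_1\ge\alpha\}$ the chain has not yet left $i$, so we stop at time $\alpha$ in state $i$ and collect $U(g(i)-c(t+\alpha))$; since $S_1\sim\exp(q_i)$ under $\Pop_i$, this event has probability $e^{-q_i\alpha}$, producing the first term in \eqref{eq:reward-iteration}.

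On the complementary event $\{S_1<\alpha\}$ I would condition on $\{S_1=s\}$ for $s\in(0,\alpha)$ and on the post-jump state $\{Z_1=j\}$. Using the independence structure of the embedded chain, the conditional density of $S_1$ is $q_i e^{-q_i s}$ and $\Pop_i(Z_1=j)=q_{ij}/q_i$ for $j\neq i$, so the joint contribution of a jump to $j$ at time $s$ carries the weight $q_i e^{-q_i s}\cdot (q_{ij}/q_i)=e^{-q_i s}q_{ij}$. After the jump, the strong Markov property and the Markovian form of $\overrightarrow{\tau}$ (whose $k$th component now counts from the jump at $S_1$) reduce the remaining problem to one with horizon $k$, state $j$, and history $t+s$; its conditional value is exactly $V_k(t+s,j,\overrightarrow{\tau})$, where the key point is that the shift $\overrightarrow{\tau}=(h_1,h_2,\ldots)$ re-indexes the decision rules so that they act on the $k$-jump problem started afresh. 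Integrating over $s\in(0,\alpha)$ and summing over $j\neq i$ yields the integral term, completing the induction.

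The main obstacle will be making the conditioning on $\{S_1<\alpha\}$ rigorous, in particular verifying that the post-jump continuation value is genuinely $V_k(t+s,j,\overrightarrow{\tau})$ and not merely a value of some implicitly shifted problem. This requires care with the clock bookkeeping: the history parameter must advance from $t$ to $t+s$, and one must check that the decomposition in Theorem \ref{thm:decomposition-n-step} together with the strong Markov property at $S_1$ justifies replacing $(X_{s+\cdot})_{\cdot\ge0}$ by a fresh copy of the chain started in $j$, under which $\overrightarrow{\tau}$ is a well-defined element of $\Sigma^M$ with horizon $k$. The measurability of $h_0$ and the finiteness ensured by (A1) guarantee the integral is well-defined; everything else is a standard, if slightly tedious, application of Fubini's theorem.
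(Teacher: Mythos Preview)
Your proposal is correct and follows essentially the same route as the paper's proof: both split the expectation according to whether $S_1$ occurs before or after the threshold $h_0(t,i)$, use the exponential law of $S_1$ together with the embedded-chain transition probabilities $q_{ij}/q_i$, and invoke the (strong) Markov property at $S_1$ to identify the continuation value with $V_k(t+s,j,\overrightarrow{\tau})$. The only cosmetic difference is that the paper phrases the argument as a direct computation for each $k$ rather than a formal induction, and splits on $\{S_1>\tau\}$ versus $\{S_1\le\tau\}$ (which coincides with your split up to a null set since $S_1$ has a density).
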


\begin{proof}
For $n=0$ the statement follows directly from the definition since $S_0=0$. For $k+1$ we obtain with the Markov property of $(X_t)$:
\begin{align*}
  V_{k+1}(t,i,\tau) &= \Eop_i\left[ U\left( g(X_{\tau\wedge S_{k+1}})-c(t+\tau\wedge S_{k+1})  \right)\right]\\
                      &= \Eop_i\left[ U\left(g(X_{\tau\wedge S_{k+1}}) -c(t+\tau\wedge S_{k+1})\right) 1_{\{S_1>\tau\}}\right]\\
                      &\quad + \Eop_i\left[ U\left(g(X_{\tau\wedge S_{k+1}}) -c(t+\tau\wedge S_{k+1})\right) 1_{\{S_1\le\tau\}}\right]\\
                       &= \Eop_i\left[ U\left(g(i) -c(t+h_0(t,i))\right) 1_{\{S_1> h_0(t,i)\}}\right]\\
                       &\quad + \int_0^{h_0(t,i)}q_i e^{-q_i s} \sum_{j\neq i} \frac{q_{ij}}{q_i} \Eop_i\left[ U\left(g(X_{{\tau}\wedge S_{k+1}}) -c(t+{\tau}\wedge S_{k+1})\right) \Big| S_1=s, Z_1=j\right] \; ds\\
                       &=  U\left(g(i) -c(t+h_0(t,i))\right) \Eop_i\left[1_{\{S_1> h_0(t,i)\}}\right]\\
                      &\quad + \int_0^{h_0(t,i)}e^{-q_i s} \sum_{j\neq i} q_{ij} \Eop_j\left[ U\left(g(X_{\overrightarrow{\tau}\wedge S_k}) -c(t+s+\overrightarrow{\tau}\wedge S_k)\right) \right] \; ds\\
                      &=  U\left(g(i) -ct -c\; h_0(t,i)\right) e^{-q_i h_0(t,i)} + \int_0^{h_0(t,i)}e^{-q_i s} \sum_{j\neq i} q_{ij} V_k(t+s,j,\overrightarrow{\tau}) \;ds.
  \end{align*}
Also note that here
\begin{equation*}
   \overrightarrow{\tau} = \overrightarrow{\tau}^0 1_{\{\tau < S_1\}} + \sum_{k=1}^{\infty} \overrightarrow{\tau}^k 1_{\{S_k\le \tau < S_{k+1}\}},\quad \Pop-a.s.
\end{equation*}
with $\overrightarrow{\tau}^k=h_k(t+s+S_k,Z_k)+S_k$ is a stopping time which starts from scratch at time $s$. This implies that the statement is true for $k+1$.
\end{proof}

Let $\mathbb{M} := \{ v:[0,\infty)\times S\to\R \:\cup\: \{-\infty\} \; |\; v \mbox{ is measurable} \}$.
Next define the following $\mathbf{T}$-operator which is defined on $\mathbb{M}$ and returns a function $(Tv):[0,\infty)\times S\to\R\cup\{-\infty\}$:
$$ (\mathbf{T}v)(t,i) := \sup_{\vartheta\ge 0} \Big\{  U\left(g(i) -c(t+\vartheta)\right) e^{-q_i \vartheta} + \int_0^{\vartheta}e^{-q_i s} \sum_{j\neq i} q_{ij} v(t+s,j) \;ds\Big\}.$$
We now have to distinguish whether $U$ has domain $\R$ (which is true e.g. for $U(x)=-e^{-\gamma x}$) or whether the domain of $U$ is restricted (which is true e.g. for $U(x)=\sqrt{x}$ or $U(x)=\ln x$).

\subsection{The utility function is defined on $\R$}
Here we will be more precise about the domain and image of the  $\mathbf{T}$-operator. Since $U$ is concave, $U$ is bounded from above by a linear function.  Suppose that $U(x)\le ax+b$ for $a,b\in\R_+$. Note that $U$ is continuous on $\R$. Let us define
\begin{eqnarray*}
% \nonumber to remove numbering (before each equation)
  \mathbb{M}_n &:=& \big\{ v\in \mathbb{M} \; |\;  v(t,i) \le a\sup_{\tau \in \Sigma^M} \Eop_i\big[g^+(X_{\tau\wedge S_n})-c(t+\tau\wedge S_n)\big] +b,\\
   &&  \quad\quad t\mapsto v(t,i) \mbox{ is decreasing  for all } i\in S\big\}.
\end{eqnarray*}
Then it is possible to show:

\begin{lemma}\label{lem:M1}
It holds that $\mathbf{T} : \mathbb{M}_n \to \mathbb{M}_{n+1}$ for $n\in\N$. Moreover, the exists a measurable $h:[0,\infty)\times S\to \R_+\cup\{\infty\}$ s.t.
$$ (\mathbf{T} v)(t,i) = U\left(g(i) -c(t+h(t,i))\right) e^{-q_i h(t,i)} + \int_0^{h(t,i)}e^{-q_i s} \sum_{j\neq i} q_{ij} v(t+s,j) \;ds.$$
In this case we call $h$ a maximizer of $\mathbf{T} v$.
\end{lemma}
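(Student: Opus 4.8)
The plan is to check that $w:=\mathbf{T}v$ inherits the three properties defining $\mathbb{M}_{n+1}$ — the linear upper bound, monotonicity in $t$, and measurability — and then to produce a measurable maximizer separately. Throughout write, for $\vartheta\ge 0$,
\[
 g_\vartheta(t,i):=U\big(g(i)-c(t+\vartheta)\big)e^{-q_i\vartheta}+\int_0^{\vartheta}e^{-q_i s}\sum_{j\neq i}q_{ij}\,v(t+s,j)\,ds ,
\]
so that $(\mathbf{T}v)(t,i)=\sup_{\vartheta\ge 0}g_\vartheta(t,i)$, and set $\psi_n(t,i):=\sup_{\tau\in\Sigma^M}\Eop_i\big[g^+(X_{\tau\wedge S_n})-c(t+\tau\wedge S_n)\big]$ for the risk-neutral bounding value.

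First I would dispose of monotonicity and measurability, which are cheap. For each fixed $\vartheta$ the map $t\mapsto g_\vartheta(t,i)$ is decreasing: the leading term decreases because $U$ is increasing and $t\mapsto g(i)-c(t+\vartheta)$ is decreasing, while the integral decreases because $v\in\mathbb{M}_n$ is decreasing in its time argument. A supremum of decreasing functions is decreasing, so $t\mapsto(\mathbf{T}v)(t,i)$ is decreasing for every $i\in S$. Since a monotone function on $[0,\infty)$ is automatically Borel measurable and $S$ is countable, this simultaneously gives $\mathbf{T}v\in\mathbb{M}$, so no separate measurability proof for the value is required.

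For the linear bound I would use concavity of $U$ through $U(x)\le ax+b$, together with the conservativity identity $\sum_{j\neq i}q_{ij}=q_i$. Estimating $U(g(i)-c(t+\vartheta))\le a(g^+(i)-c(t+\vartheta))+b$ and $v(t+s,j)\le a\,\psi_n(t+s,j)+b$, the two occurrences of the constant $b$ recombine,
\[
 b\,e^{-q_i\vartheta}+b\int_0^{\vartheta}e^{-q_i s}q_i\,ds=b ,
\]
and so drop out of the optimisation over $\vartheta$. This reduces the claim to the risk-neutral dynamic programming inequality
\[
 \big(g^+(i)-c(t+\vartheta)\big)e^{-q_i\vartheta}+\int_0^{\vartheta}e^{-q_i s}\sum_{j\neq i}q_{ij}\,\psi_n(t+s,j)\,ds\ \le\ \psi_{n+1}(t,i),\qquad\vartheta\ge 0,
\]
which I would prove by feeding the left-hand side the concatenated policy ``wait until time $\vartheta$, then continue $\varepsilon$-optimally over the remaining $n$ steps'': by the reward iteration of Theorem~\ref{thm:reward-iteration} (applied with $U=\mathrm{id}$ and reward $g^+$) this policy has risk-neutral value at least the left-hand side minus $\varepsilon$, while it cannot exceed $\psi_{n+1}(t,i)$; letting $\varepsilon\downarrow 0$ and then taking the supremum over $\vartheta$ yields $(\mathbf{T}v)(t,i)\le a\,\psi_{n+1}(t,i)+b$, i.e. $\mathbf{T}v\in\mathbb{M}_{n+1}$.

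For the maximizer I would show that $\vartheta\mapsto g_\vartheta(t,i)$ is upper semicontinuous on the compactified half-line $[0,\infty]$: the leading term is continuous since $U$ is continuous on $\R$, and the integral term is continuous wherever finite and upper semicontinuous in general, its integrand being dominated from above by an integrable function but possibly failing integrability from below where $v=-\infty$. An upper semicontinuous function on the compact set $[0,\infty]$ attains its supremum, which yields a maximizer $h(t,i)\in\R_+\cup\{\infty\}$ for each $(t,i)$; measurability of the selection $h$ then follows from a standard measurable selection theorem for argmaxima, using that $(\vartheta,t,i)\mapsto g_\vartheta(t,i)$ is jointly measurable and that $S$ is countable. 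The step I expect to be the main obstacle is exactly this last one — securing a genuinely \emph{measurable} selector while simultaneously controlling the degenerate behaviour of $g_\vartheta$ as $\vartheta\to\infty$ (where, for steep $U$ such as the exponential, the leading term may tend to $-\infty$) and on the set where $v$ takes the value $-\infty$. By comparison, the propagation of the upper bound is essentially bookkeeping once the constant $b$ has been absorbed via $\sum_{j\neq i}q_{ij}=q_i$.
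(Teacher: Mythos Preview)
Your argument is correct and tracks the paper closely for the upper bound and for the monotonicity in $t$: both you and the paper run the $\varepsilon$-optimal concatenation to push $\psi_n$ through one more step (your reduction to the risk-neutral inequality is exactly the computation the paper writes out), and both observe that a supremum of decreasing functions is decreasing. Your remark that monotonicity of $t\mapsto(\mathbf{T}v)(t,i)$ already forces Borel measurability is a pleasant shortcut the paper does not exploit.

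The divergence is in how the measurable selector is produced. You compactify to $[0,\infty]$, argue upper semicontinuity of $\vartheta\mapsto g_\vartheta(t,i)$, and then invoke a generic argmax selection theorem; you rightly flag the behaviour at $\vartheta=\infty$ (where the leading term can be of the indeterminate type $(-\infty)\cdot 0$) as the crux. The paper avoids this entirely by not compactifying: it first notes that $\vartheta\mapsto m(t,\vartheta)$ is continuous on $\R_+$ (using that $s\mapsto v(t+s,j)$, being decreasing, has at most countably many discontinuities, so the integral term is continuous in $\vartheta$), then applies the Brown--Purves measurable selection theorem on $\R_+$, which returns a measurable selector on the measurable set $I=\{t:\sup_\vartheta m(t,\vartheta)\text{ is attained at some finite }\vartheta\}$. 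On the complement $I^c$ it sets $h=\infty$ and observes that there $\sup_\vartheta m(t,\vartheta)=\lim_{\vartheta\to\infty}m(t,\vartheta)$ is measurable as a pointwise limit. This cleanly sidesteps having to define or analyse $g_\infty$. Your route can be made rigorous (e.g.\ by setting $g_\infty:=\limsup_{\vartheta\to\infty}g_\vartheta$, which does not raise the supremum), but the paper's $I$/$I^c$ split is more concrete and addresses exactly the obstacle you anticipated.
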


\begin{proof}
Let $v\in \mathbb{M}_n$. First we show the upper bound: Since $v\in \mathbb{M}_n$ there exists for all $\varepsilon>0$ a stopping time $\tau^\varepsilon$ s.t. $$v(t,i) \le a \Eop_i\big[g^+(X_{\tau^\varepsilon\wedge S_n})-c(t+\tau^\varepsilon\wedge S_n)\big] +b+\varepsilon.$$ Thus we obtain for all $\vartheta\ge 0$ like in the proof of Theorem \ref{thm:reward-iteration}:
\begin{eqnarray*}
% \nonumber to remove numbering (before each equation)
   && U\left(g(i)-c(t+\vartheta)\right) e^{-q_i \vartheta} + \int_0^{\vartheta}e^{-q_i s} \sum_{j\neq i} q_{ij} v(t+s,j) \;ds \\
   &\le& \big( a(g^+(i) -c(t+\vartheta))+b\big) e^{-q_i \vartheta} +\\
   && +  \int_0^{\vartheta}e^{-q_i s} \sum_{j\neq i} q_{ij} \big( a\Eop_j\big[g^+(X_{\tau^\varepsilon\wedge S_n})-c(t+s+\tau^\varepsilon\wedge S_n)\big] +b\big) \;ds+\varepsilon \\
   && \le a \Eop_i\big[g^+(X_{\sigma^\varepsilon\wedge S_{n+1}})-c(t+\sigma^\varepsilon\wedge S_{n+1})\big] +b+\varepsilon
\end{eqnarray*}
where $\sigma^\varepsilon=(h_0,\tau^\varepsilon)$ with $h_0\equiv \vartheta$.
Since this is true for all $\varepsilon,\vartheta\ge 0$ we obtain the upper bound by letting $\varepsilon\downarrow 0$ and by taking the supremum over all stopping times $\sigma^\varepsilon$.

Next $t\mapsto (\mathbf{T} v)(t,i)$ is decreasing since $t\mapsto U(t)$ is increasing and $t\mapsto v(t,i)$ is by assumption decreasing.

Last but not least we show that $\mathbf{T} v$ is again measurable and there exists a measurable selector. Since $S$ is discrete, we can concentrate on $t$. The first part $(t,\vartheta)\mapsto U\left(g(i) -ct -c\vartheta\right) e^{-q_i \vartheta}$ is even continuous by our assumptions on $U$. For the second part  $(t,\vartheta)\mapsto \int_0^{\vartheta}e^{-q_i s} \sum_{j\neq i} q_{ij} v(t+s,j) \;ds$ is measurable and continuous in $\vartheta$ since by assumption $s\mapsto v(t+s,j)$ is decreasing and can thus only have a countable number of jumps on $[0,\vartheta]$. Let us define
$$m(t,\vartheta) := U\left(g(i)-c(t+\vartheta)\right) e^{-q_i \vartheta} + \int_0^{\vartheta}e^{-q_i s} \sum_{j\neq i} q_{ij} v(t+s,j) \;ds.$$
We can now apply the measurable selection theorem of \cite{Brown1973} (Corollary 1) which states that on $I := \{ t\in\R_+ :  m(t,\vartheta^*)= \sup_{\vartheta\ge 0} m(t,\vartheta) \mbox{ for some } \vartheta^*\in\R_+\}$ there exists a measurable selector $\varphi$ s.t. $m(t,\varphi(t)) = \sup_{\vartheta\ge 0} m(t,\vartheta)$. Thus $m(t,\varphi(t))$ is again measurable. Outside $I$ we have $$ \sup_{\vartheta\ge 0} m(t,\vartheta) = \lim_{\vartheta\to \infty} m(t,\vartheta)$$
which is measurable as a limit of measurable functions.
\end{proof}

\subsection{The utility function is defined on a subset of $\R$}
Now we assume that $dom(U)=[d,\infty)$ with $d<\inf_{i\in S} g(i)$  and that $U$ is continuous on its domain. Then obviously the domain of $V_0(\cdot,i)$ is given by $[0,\frac{g(i)-d}{c}]$ when the initial state of the Markov chain is $i$.
%Let us recursively define the time points
%\begin{eqnarray*}
% \nonumber to remove numbering (before each equation)
%  T_0(i)  &:=& \frac{g(i)-d}{c} \\
%  T_n(i)  &:=&\min\Big\{ \inf_{\substack{j\neq i,\\ q_{ij}>0}} T_{n-1} (j), \frac{g(i)-d}{c}\Big\}.
%\end{eqnarray*}
Let us define $d(i) := \frac{g(i)-d}{c}$ and
\begin{eqnarray*}
% \nonumber to remove numbering (before each equation)
  \mathbb{M}_n &:=& \big\{ v(\cdot,i): [0,d(i)] \to \R\; |\, v \mbox{ is decreasing  and continuous for all } i\in S,\\
   &&\quad\quad v(t,i) \le a\sup_{\tau \in \Sigma^M} \Eop_i\big[g^+(X_{\tau\wedge S_n})-c(t+\tau\wedge S_n)\big] +b\big\}.
\end{eqnarray*}
Then it is possible to show:

\begin{lemma}\label{lem:M2}
It holds that $\mathbf{T} : \mathbb{M}_n \to \mathbb{M}_{n+1}$ for $n\in\N$. Moreover, the exists a measurable $h(\cdot,i):[0,d(i)]\to \R^+$ s.t.
$$ (\mathbf{T} v)(t,i) = U\left(g(i) -ct -c h(t,i)\right) e^{-q_i h(t,i)} + \int_0^{h(t,i)}e^{-q_i s} \sum_{j\neq i} q_{ij} v(t+s,j) \;ds$$
i.e. $h$ is a maximizer of $\mathbf{T} v$.
\end{lemma}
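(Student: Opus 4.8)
The plan is to mirror the structure of the proof of Lemma \ref{lem:M1}, adapting each step to the restricted-domain setting where now every function $v(\cdot,i)$ lives on the compact interval $[0,d(i)]$ and the optimization variable $\vartheta$ ranges only over a bounded set. First I would check that $\mathbf{T}$ maps $\mathbb{M}_n$ into $\mathbb{M}_{n+1}$. The upper-bound estimate is identical to the one in Lemma \ref{lem:M1}: for $v\in\mathbb{M}_n$ pick for each $\varepsilon>0$ a stopping time $\tau^\varepsilon$ nearly attaining the bound, insert it into the integral expression, and recognize the result as the value of the concatenated stopping time $\sigma^\varepsilon=(h_0,\tau^\varepsilon)$ with $h_0\equiv\vartheta$, giving $(\mathbf{T}v)(t,i)\le a\sup_{\tau}\Eop_i[g^+(X_{\tau\wedge S_{n+1}})-c(t+\tau\wedge S_{n+1})]+b$. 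Monotonicity in $t$ again follows because $U$ is increasing and $s\mapsto v(t+s,j)$ is decreasing.

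The genuinely new feature is the domain bookkeeping. Here I must verify that the argument $g(i)-c(t+\vartheta)$ stays inside $dom(U)=[d,\infty)$, which is exactly why $\vartheta$ is constrained: we need $t+\vartheta\le d(i)=\tfrac{g(i)-d}{c}$, so the effective admissible set of $\vartheta$ is $[0,d(i)-t]$, a compact interval. Similarly, in the integral term the integrand $v(t+s,j)$ is only defined and finite for $t+s\le d(j)$, but by the assumption $d<\inf_{i}g(i)$ the constant $d(j)>0$ for every $j$, so for $s$ near $0$ the term is well defined; where $t+s$ exceeds $d(j)$ the extended utility $\hat U$ forces $V_k(t+s,j,\cdot)=-\infty$, which simply makes those $\vartheta$ suboptimal and hence irrelevant to the supremum. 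I would note explicitly that $(\mathbf{T}v)(\cdot,i)$ is defined on $[0,d(i)]$ and is continuous there, the latter because $v(\cdot,i)$ is now assumed continuous (not merely decreasing), so the integrand is jointly continuous and the boundary term $U(g(i)-c(t+\vartheta))e^{-q_i\vartheta}$ is continuous up to the endpoint where the argument hits $d$.

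For the existence of a measurable maximizer the situation is in fact easier than in Lemma \ref{lem:M1}. Writing
$$ m(t,\vartheta) := U\left(g(i)-c(t+\vartheta)\right) e^{-q_i \vartheta} + \int_0^{\vartheta}e^{-q_i s} \sum_{j\neq i} q_{ij} v(t+s,j) \;ds, $$
the function $\vartheta\mapsto m(t,\vartheta)$ is now continuous on the compact interval $[0,d(i)-t]$, so the supremum is attained and there is no need to treat a separate regime at $\vartheta\to\infty$. Thus the set $I=\{t:\sup_\vartheta m(t,\vartheta)\text{ is attained}\}$ is the whole domain, and I would invoke the same measurable selection theorem (Corollary 1 of \cite{Brown1973}) to obtain a measurable selector $\varphi$ with $m(t,\varphi(t))=\sup_\vartheta m(t,\vartheta)$, setting $h(t,i):=\varphi(t)$. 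This yields the claimed representation of $\mathbf{T}v$ and simultaneously confirms measurability of $(\mathbf{T}v)(\cdot,i)$.

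The main obstacle I anticipate is not any single estimate but the careful handling of the boundary of the domain: ensuring that continuity of $v(\cdot,i)$ is genuinely needed and used to propagate continuity of $\mathbf{T}v$ through the iteration (so that $\mathbf{T}v\in\mathbb{M}_{n+1}$ really is continuous, which is part of the definition of $\mathbb{M}_{n+1}$), and checking that the assumption $d<\inf_{i\in S}g(i)$ guarantees each $d(i)>0$ so the intervals are nondegenerate and stopping immediately is always feasible with a finite utility value. Once those domain issues are pinned down, compactness of the feasible set of $\vartheta$ makes the rest a direct transcription of the earlier lemma.
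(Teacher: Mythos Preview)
Your proposal is correct and essentially parallels the paper's proof: the upper bound, the monotonicity in $t$, and the domain bookkeeping (feasible $\vartheta\in[0,d(i)-t]$) all match. The one difference is in how continuity of $\mathbf{T}v$ and the existence of a measurable maximizer are obtained. The paper dispatches both at once by invoking Theorem~2.4.10 in \cite{br11}, a Berge-type maximum theorem: since $(t,\vartheta)\mapsto m(t,\vartheta)$ is continuous, the constraint set $[0,\tfrac{g(i)-ct-d}{c}]$ is compact, and the set-valued map $t\mapsto[0,\tfrac{g(i)-ct-d}{c}]$ is continuous, that theorem yields simultaneously that $t\mapsto(\mathbf{T}v)(t,i)$ is continuous and that a measurable maximizer exists. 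You instead argue continuity of $\mathbf{T}v$ directly and then appeal to Brown--Purves for the selector; this works, but your continuity step is a bit informal---joint continuity of $m$ alone does not give continuity of the supremum when the feasible set varies with $t$, so you would still need to invoke (or reprove) Berge's theorem at that point. Once you make that explicit, the two arguments are equivalent; the paper's route is just more economical because the cited structure theorem packages both conclusions.
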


\begin{proof}
Fix $i\in S$. The upper bound follows in the same way as in the proof of Lemma \ref{lem:M1}. Let us next consider the domain. Suppose $v\in \mathbb{M}_n$ with domain $[0,\frac{g(i)-d}{c}]$  and consider $$m(t,\vartheta):=U\left(g(i) -c(t+\vartheta)\right) e^{-q_i \vartheta} + \int_0^{\vartheta}e^{-q_i s} \sum_{j\neq i} q_{ij} v(t+s,j) \;ds.$$
When we set $\vartheta=0$, then $m(t,0)\in\R$  if and only if $t\le  \frac{g(i)-d}{c}$. If $\vartheta>0$ then the interval on which $m(t,\vartheta)$ is finite can only get smaller. Hence the domain of $t\mapsto (\mathbf{T} v)(t,i)$ is again $[0,d(i)]$.

Next $t\mapsto (\mathbf{T} v)(t,i)$ is decreasing since $t\mapsto U(t)$ is increasing and $t\mapsto v(t,i)$ is by assumption decreasing.

Finally we have to show that $t\mapsto (\mathbf{T} v)(t,i)$ is continuous and the existence of a maximizer. But this follows from Theorem 2.4.10 in \cite{br11} since $(t,\vartheta) \mapsto m(t,\vartheta)$ is continuous, the set $[0,\frac{g(i)-ct-d}{c} ]$ over which the function has to be maximized is compact, and the set-valued mapping $t\mapsto [0,\frac{g(i)-ct-d}{c} ]$ is continuous.
\end{proof}

\begin{remark}
The case that $dom(U)=(d,\infty)$ with $d<\inf_{i\in S} g(i)$  and $U$ is continuous on its domain with $\lim_{x\downarrow d} U(x)=-\infty$ can be treated similarly. Here we have to consider
\begin{eqnarray*}
% \nonumber to remove numbering (before each equation)
  \mathbb{M}_n &:=& \big\{ v(\cdot,i): [0,d(i)) \to \R \; |\; v \mbox{ is decreasing  and continuous for all } i\in S,\\
   &&\quad\quad v(t,i) \le a\sup_{\tau \in \Sigma^M} \Eop_i\big[g^+(X_{\tau\wedge S_n})-ct-c(\tau\wedge S_n)\big] +b\big\}.
\end{eqnarray*}
Then Lemma \ref{lem:M2} holds in analogous way. The existence of a maximizer follows by considering the level sets $\{\vartheta \ge 0 : m(t,\vartheta)\ge m(t,0)\}$ for optimization which are again compact.
\end{remark}

\subsection{The optimality equation}
Combining the results of the previous subsections we obtain in both cases the following result (where in the case of bounded domain we set $V_n(t,i)=-\infty$ if $t$ is not in the domain).

\begin{theorem}\label{thm:bellman}
  \begin{itemize}
  \item[a)]  For  $(t,i)\in [0,\infty)\times S$ it holds that $ V_0(t,i) = U\big(g(i)-ct\big)$ and for $k=0,1\ldots ,n-1$
          \begin{align}
         \nonumber   V_{k+1}(t,i) =& (\mathbf{T} V_k)(t,i)\\ \label{eq:bellman-equation}
            & \sup_{\vartheta\ge 0}\bigg\{ U\big(g(i)-c(t+\vartheta) \big)  e^{-q_i\vartheta} + \int_0^{\vartheta}e^{-q_i s} \sum_{j\neq i} q_{ij} V_k(t+s,j) \;ds \bigg\}.
          \end{align}
  \item[b)]For each $k=0,1,\ldots,n-1$ there exist maximizers $h_k^*$ of $\mathbf{T} V_k$ and the stopping time defined by
  $ \tau^*=(h_0^*,h_1^*,\ldots,h_n^*,\ldots)$ is optimal for problem \eqref{eq:problemfinite}.
  \end{itemize}
\end{theorem}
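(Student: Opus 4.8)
The plan is to prove both parts simultaneously by induction on $k$, using the reward iteration of Theorem \ref{thm:reward-iteration} as the engine and the measurable selection provided by Lemma \ref{lem:M1} (respectively Lemma \ref{lem:M2}) to produce the optimal decision rules. I would first record the base case: since $S_0=0$ we have $V_0(t,i,\tau)=U(g(i)-ct)$ for every $\tau\in\Sigma^M$, hence $V_0(t,i)=U(g(i)-ct)$, and one checks directly that $V_0\in\mathbb{M}_0$ (it is decreasing in $t$ and bounded above by $a(g^+(i)-ct)+b$, because $\tau\wedge S_0=0$ forces $\sup_\tau\Eop_i[g^+(X_{\tau\wedge S_0})-c(t+\tau\wedge S_0)]=g^+(i)-ct$). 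Writing $W_k:=\mathbf{T}^k V_0$, the lemma gives $W_k\in\mathbb{M}_k$ together with a maximizer of $\mathbf{T}W_k$ at each stage; the goal of the induction is to show $W_k=V_k$ and that stitching these maximizers together yields an optimal stopping time.

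For the inductive step I would establish the two inequalities separately. For the upper bound $V_{k+1}\le W_{k+1}$, I take an arbitrary $\tau=(h_0,h_1,\ldots)\in\Sigma^M$, apply the reward iteration to write $V_{k+1}(t,i,\tau)$ as the sum of the stopping term and the integral over $V_k(t+s,j,\overrightarrow{\tau})$, bound $V_k(\cdot,\cdot,\overrightarrow{\tau})\le V_k(\cdot,\cdot)=W_k$ by the induction hypothesis, and then recognize the resulting expression, with $\vartheta=h_0(t,i)$, as admissible in the supremum defining $(\mathbf{T}W_k)(t,i)$. Taking the supremum over $\tau$ gives $V_{k+1}\le\mathbf{T}W_k=W_{k+1}$.

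For the lower bound and optimality I would use the maximizer $h^*$ of $\mathbf{T}W_k=\mathbf{T}V_k$ supplied by the lemma as the head decision rule, and prepend it to a stopping time $\sigma^*\in\Sigma^M$ that is optimal for the $k$-step problem (available from the induction hypothesis), setting $\tau^*=(h^*,\sigma^*)$ so that $\overrightarrow{\tau^*}=\sigma^*$. Because $\sigma^*$ is Markovian it is optimal from every post-jump starting point simultaneously, so $V_k(t+s,j,\overrightarrow{\tau^*})=V_k(t+s,j)=W_k(t+s,j)$, and the reward iteration then evaluates $V_{k+1}(t,i,\tau^*)$ to exactly $(\mathbf{T}W_k)(t,i)=W_{k+1}(t,i)$. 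This yields $V_{k+1}\ge W_{k+1}$, closing the induction, establishing the optimality equation of part a) and exhibiting the optimal stopping time of part b).

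The step I expect to be most delicate is the lower bound, specifically the identity $V_k(t+s,j,\overrightarrow{\tau^*})=V_k(t+s,j)$: it hinges on the continuation strategy being attained rather than merely $\varepsilon$-optimal, and on its being Markovian so that a single $\sigma^*$ is simultaneously optimal for all post-jump states $j$ and elapsed times $t+s$, which is precisely what the restriction to $\Sigma^M$ together with the selectors of Lemmas \ref{lem:M1} and \ref{lem:M2} buys us. The remaining bookkeeping --- checking measurability of the concatenated decision rules and verifying that $\mathbf{T}$ indeed maps each $\mathbb{M}_k$ into $\mathbb{M}_{k+1}$ so the class membership propagates from $V_0\in\mathbb{M}_0$ --- is routine given the two lemmas.
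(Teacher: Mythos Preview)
Your proof is correct and complete. The induction using the reward iteration of Theorem~\ref{thm:reward-iteration} for both the upper and lower bound, with the measurable maximizers from Lemma~\ref{lem:M1} (resp.\ Lemma~\ref{lem:M2}) producing a single Markovian continuation strategy that is simultaneously optimal from every post-jump state, is exactly the standard verification argument for Bellman equations in MDP theory. You also identify the one genuinely delicate point --- that the lower bound requires an attained, Markovian optimum at stage $k$ --- and explain correctly why the lemmas supply it.

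The paper does not write out this argument; it simply cites Theorem~2.3.8 of \cite{br11}, observing that Lemmas~\ref{lem:M1} and~\ref{lem:M2} verify the ``structure assumption'' that theorem requires. So the difference is purely one of presentation: the paper appeals to a general MDP result, while you unpack that result in this specific setting. Your version is self-contained and makes the role of the reward iteration and of the Markovian restriction explicit, at the cost of a page of routine bookkeeping; the paper's version is a one-line citation that is opaque unless the reader has the reference at hand. Neither is more general or more elementary than the other --- your argument is essentially what Theorem~2.3.8 in \cite{br11} says when specialized here.
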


The proof follows from Theorem 2.3.8 in \cite{br11} since Lemma \ref{lem:M1} and Lemma \ref{lem:M2} respectively show that the structure assumption is satisfied.

The interesting observation for these risk-sensitive stopping problems is the fact that it might be optimal to stop between jumps of the Markov chain. This is in contrast to risk-neutral stopping problems where it is a folk theorem that it is enough to consider only jump time points for optimal stopping. The next example highlights this fact.

\begin{example}
 Let $S=\{0,1\}$ and let $(X_t)$ be a continuous-time Markov chain with intensity matrix $Q$ given by
   $$      Q =  \begin{pmatrix} -\alpha & \alpha \\ \beta & -\beta \end{pmatrix}$$
   for some $\alpha,\beta>0$. We consider the logarithmic utility function $U(x)=\ln (x)$. Moreover, let $c>0$ be the cost rate and assume that $g(0)>0$ is the gain when we stop in state $0$ and $ g(1) = K g(0)$ is the gain when we stop in state $1$ where $K>1$. It is not difficult to see that the domain of $V_k(\cdot,0)$ for all $k$ is given by $[0,\frac{g(0)}{c})$. Moreover, we obtain for all $k$
   $$   V_k(t,1) = U\big(g(1)-ct\big) = \ln\big(g(1)-ct\big),\quad t\in \Big[0,\frac{g(1)}{c}\Big) $$
    and it is optimal to stop immediately, i.e. $\tau^*\equiv 0.$ Now consider state $0$. From the optimality equation we get
    \begin{align*}
     V_{k+1}(t,0) = & \sup_{\vartheta\ge 0} \Big\{ U\big(g(0)-c(t+\vartheta)\big)  e^{-q_0\vartheta}
                + \int_0^{\vartheta}e^{-q_0 s} q_{01} V_k(t+s,1) \;ds\Big\}\\
              = & \sup_{\vartheta\ge 0} \Big\{ \ln\big(g(0)-c(t+\vartheta) \big) e^{-\alpha \vartheta}
                + \int_0^{\vartheta}e^{-\alpha s}  q_{01} \ln\big(g(1)-c(t+s)\big) \;ds \Big\}.
          \end{align*}
Differentiating this function we see that the maximum point $\vartheta^*$ is either the unique solution $\vartheta$ of the equation
\begin{equation*}\label{eq:exa-bellman-log-utility-zero-condition}
             e = \left[ 1+\frac{\frac{\alpha}{c}\big(g(1)-g(0)\big)}{\alpha\left( \frac{g(0)}{c}-(t+\vartheta)\right)}\right]^{\alpha\left( \frac{g(0)}{c}-(t+\vartheta)\right)}
           \end{equation*}
whenever this point is in    $[0,\frac{g(0)}{c})$. Otherwise $\vartheta^*=0$. When we consider the specific values
$$ g(0)=10, \quad g(1)=\frac{e^{10}+99}{10} \approx 2212.55 \quad \text{ and } \quad \alpha=\beta=c=1$$
then we obtain for the optimal stopping time $\tau^*=(h^*,h^*,\ldots)$ with
$$  h^*(t,i) = \begin{cases}
                              0,                    &i=1 \quad\text{ or }\quad i=0 \text{ and } t\ge 9.9,\\
                              9.9-t,  &i=0 \text{ and } t\in[0, 9.9).
                            \end{cases}$$
This means in state  $(t,0)$ we are willing to wait for a jump into the 'good' state $1$ but only for the limited amount of time $9.9-t$. If this time is over we will stop in the 'bad' state $0$.
\end{example}

The example also shows that the optimal stopping time $\tau^*=(h_0^*,h_1^*,\ldots)$ satisfies a certain consistency condition. A result which we will generalize in the next lemma:

\begin{lemma}\label{lem:consist}
Let $h_k^*$ be a maximizer of \eqref{eq:bellman-equation} and suppose that $h_k^*(t,i) \ge \delta>0$. Then \linebreak $h_k^*(t+\delta,i)=h_k^*(t,i)-\delta$.
\end{lemma}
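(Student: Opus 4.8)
The plan is to exploit the memorylessness encoded in the $\mathbf{T}$-operator, which makes the objective at $(t+\delta,i)$ an increasing affine transform of the objective at $(t,i)$ once the latter is restricted to waiting times $\ge\delta$. Fixing $i$ and writing, as in the proofs of Lemma \ref{lem:M1} and Lemma \ref{lem:M2},
\[
 m(t,\vartheta) := U\big(g(i)-c(t+\vartheta)\big)e^{-q_i\vartheta} + \int_0^{\vartheta} e^{-q_i s}\sum_{j\neq i} q_{ij} V_k(t+s,j)\,ds ,
\]
so that by \eqref{eq:bellman-equation} the quantity $h_k^*(t,i)$ is a maximizer of $\vartheta\mapsto m(t,\vartheta)$ over $[0,\infty)$, the first step is to establish the identity
\[
 m(t,\delta+\eta) = e^{-q_i\delta}\, m(t+\delta,\eta) + C, \qquad \eta\ge 0,
\]
with $C := \int_0^{\delta} e^{-q_i s}\sum_{j\neq i} q_{ij} V_k(t+s,j)\,ds$ a constant independent of $\eta$.

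To obtain this identity I would substitute $\vartheta=\delta+\eta$, factor $e^{-q_i\delta}$ out of the terminal term $U\big(g(i)-c(t+\delta+\eta)\big)e^{-q_i(\delta+\eta)}$, split the integral as $\int_0^{\delta+\eta}=\int_0^{\delta}+\int_{\delta}^{\delta+\eta}$, and change variables $s=\delta+u$ in the second piece. The second piece then produces exactly $e^{-q_i\delta}$ times the integral occurring in $m(t+\delta,\eta)$, while the first piece is the constant $C$. This computation is the heart of the argument and the place where the exponential weight $e^{-q_i s}$ is indispensable: it is precisely the multiplicative splitting $e^{-q_i(\delta+u)}=e^{-q_i\delta}e^{-q_i u}$ that yields a single positive scaling factor and a shift, and for a non-exponential holding-time weight the clean affine relation would break down. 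I expect this splitting/change-of-variables bookkeeping to be the main (though routine) obstacle.

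Given the identity the conclusion is immediate. Because $e^{-q_i\delta}>0$ and $C$ is independent of $\eta$, the maps $\eta\mapsto m(t,\delta+\eta)$ and $\eta\mapsto m(t+\delta,\eta)$ are increasing affine transforms of one another and hence share their maximizers on $[0,\infty)$. By hypothesis $\vartheta^*:=h_k^*(t,i)\ge\delta$ is a global maximizer of $m(t,\cdot)$ on $[0,\infty)$; since it lies in $[\delta,\infty)$, the point $\eta=\vartheta^*-\delta\ge 0$ maximizes $\eta\mapsto m(t,\delta+\eta)$, and therefore also maximizes $m(t+\delta,\cdot)$, giving $h_k^*(t+\delta,i)=h_k^*(t,i)-\delta$. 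The one delicate point I would flag is uniqueness: the stated equality reads off the \emph{selected} maximizer at $(t+\delta,i)$, which is forced when the maximizer is unique (as in the strictly concave settings considered); in general the argument shows only that $h_k^*(t,i)-\delta$ lies among the maximizers at $(t+\delta,i)$, so the lemma is understood as choosing this consistent maximizer.
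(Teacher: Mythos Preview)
Your argument is correct and uses the same key idea as the paper's proof: exploit the multiplicative splitting of the exponential weight $e^{-q_i s}$ to show that the objective at one time is a positive affine transform of the objective at another, so the maximizers agree after the obvious shift. The paper organizes this slightly differently --- it substitutes $\hat{s}=t+s$ in the integral to rewrite $m(t,\vartheta)$ as $e^{q_i t}$ times an expression depending on $(t,\vartheta)$ only through the sum $t+\vartheta$ (after adding a harmless constant), and then reads off the conclusion from this ``depends only on $t+\vartheta$'' structure --- whereas you fix the specific shift $\delta$ and compare $m(t,\delta+\eta)$ with $m(t+\delta,\eta)$ directly. Both routes are equivalent reformulations of the same computation; yours is arguably a bit more transparent for the stated lemma, while the paper's formulation makes the underlying time-homogeneity more visible. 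Your remark on uniqueness of the selected maximizer is a valid caveat that the paper leaves implicit.
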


\begin{proof}
Fix $t\ge 0$. We have to maximize
$$ \vartheta\mapsto m(t,\vartheta):=  U\big(g(i)-c(t+\vartheta) \big)  e^{-q_i\vartheta} + \int_0^{\vartheta}e^{-q_i s} \sum_{j\neq i} q_{ij} V_k(t+s,j) \;ds $$
By an obvious substitution in the integral we obtain
$$ m(t,\vartheta)=  e^{q_i t} \Big( U\big(g(i)-c(t+\vartheta) \big)  e^{-q_i(t+\vartheta)} + \int_t^{t+\vartheta}e^{-q_i \hat{s}} \sum_{j\neq i} q_{ij} V_k(\hat{s},j) \;d\hat{s}\Big). $$
Since $e^{q_i t}>0$, maximizing $m(t,\vartheta)$ in $\vartheta$ leads to the same maximum point than maximizing
$$ \vartheta\mapsto U\big(g(i)-c(t+\vartheta) \big)  e^{-q_i(t+\vartheta)} + \int_t^{t+\vartheta}e^{-q_i \hat{s}} \sum_{j\neq i} q_{ij} V_k(\hat{s},j) \;d\hat{s}.$$
Adding now the constant $\int_0^{t}e^{-q_i \hat{s}} \sum_{j\neq i} q_{ij} V_k(\hat{s},j) \;d\hat{s}$  does not change the maximum point, hence we can equivalently maximize
$$ \vartheta\mapsto U\big(g(i)-c(t+\vartheta) \big)  e^{-q_i(t+\vartheta)} + \int_0^{t+\vartheta}e^{-q_i \hat{s}} \sum_{j\neq i} q_{ij} V_k(\hat{s},j) \;d\hat{s}.$$
The resulting function however depends on $(t,\vartheta)$ only by $t+\vartheta$. This implies the result.
\end{proof}

\subsection{Exponential utility}
Let us now consider the special case $U(x)=-e^{-\gamma x}$ for $\gamma>0$ and $x\in\R$.  Obviously $V_0(t,i)= -e^{c\gamma t} e^{-\gamma g(i)}$. By definition of the value functions in \eqref{eq:problemfinite} we have in this special case $V_k(t,i)=e^{c\gamma t} W_k(i)$ with a function $W:S\to (-\infty,0]$.  Here we obtain

\begin{theorem} If $q_i\le c\gamma$ we obtain that $W_k(i)= -e^{-\gamma g(i)}$ for $k=0,1\ldots ,n-1$ and $\tau^*\equiv 0$ is optimal. Now suppose that $q_i > c\gamma$. Then it holds:
  \begin{itemize}
  \item[a)]  For  $(t,i)\in [0,\infty)\times S$ it holds that $ W_0(i) = -e^{-\gamma g(i)}$ and for $k=0,1\ldots ,n-1$
          \begin{align}\label{eq:bellman-equation_exp1}
            W_{k+1}(i) =& \max\bigg\{-e^{-\gamma g(i)}, \sum_{j\neq i} \frac{q_{ij}}{q_i-c\gamma} W_k(j) \bigg\}.
          \end{align}
  \item[b)]For each $k=0,1,\ldots,n-1$ there exist maximizers $f_k^*:S\to \{0,\infty\}$ (where we set $f_k^*(i):=0$ if the maximum is attained in the first expression and $f_k^*(i):=\infty$ if the maximum in \eqref{eq:bellman-equation_exp1} is attained in the second expression) and the stopping time defined by
\begin{equation*}
   \tau^* = f_0^*(Z_0) 1_{\{\tau^* < S_1\}} + \sum_{k=1}^{\infty} (f_k^*(Z_k)+S_k) 1_{\{S_k\le \tau^* < S_{k+1}\}},\quad \Pop-a.s.
\end{equation*}
 is optimal for the problem with exponential utility.
  \end{itemize}
\end{theorem}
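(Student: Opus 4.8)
The plan is to exploit the multiplicative structure that the exponential utility forces on the value function. First I would verify the factorization $V_k(t,i)=e^{c\gamma t}W_k(i)$ asserted in the text: since
$U\big(g(X_\tau)-c(t+\tau)\big)=-e^{-\gamma g(X_\tau)}\,e^{c\gamma t}\,e^{c\gamma\tau}$,
the deterministic factor $e^{c\gamma t}$ pulls out of the expectation defining $V_k(t,i,\tau)$ in \eqref{eq:problemfinite}, hence also out of the supremum over $\tau\in\Sigma^M$, leaving a function $W_k(i)=\sup_{\tau}\Eop_i\big[-e^{-\gamma g(X_{\tau\wedge S_k})}e^{c\gamma(\tau\wedge S_k)}\big]$ that does not depend on $t$. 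Because $g$ is real-valued, $W_0(i)=-e^{-\gamma g(i)}<0$, and I would carry the hypothesis $W_k\le 0$ through the induction.

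Next I would insert this ansatz into the optimality equation \eqref{eq:bellman-equation} of Theorem \ref{thm:bellman}. Writing $U(g(i)-c(t+\vartheta))=-e^{-\gamma g(i)}e^{c\gamma t}e^{c\gamma\vartheta}$ and $V_k(t+s,j)=e^{c\gamma(t+s)}W_k(j)$, the common factor $e^{c\gamma t}$ comes out of $\sup_{\vartheta\ge0}$ and reduces \eqref{eq:bellman-equation} to
\[
 W_{k+1}(i)=\sup_{\vartheta\ge 0}\Big\{-e^{-\gamma g(i)}e^{(c\gamma-q_i)\vartheta}+\sum_{j\neq i}q_{ij}W_k(j)\int_0^{\vartheta}e^{(c\gamma-q_i)s}\,ds\Big\}.
\]
The decisive simplification is that, after computing the elementary integral, the $\vartheta$-derivative of the expression in braces equals $e^{(c\gamma-q_i)\vartheta}\big[(q_i-c\gamma)e^{-\gamma g(i)}+\sum_{j\neq i}q_{ij}W_k(j)\big]$, and the factor in the square bracket is \emph{independent of} $\vartheta$. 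Hence the objective is monotone in $\vartheta$, so its supremum is attained either at $\vartheta=0$ or in the limit $\vartheta\to\infty$.

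It then remains to read off these two endpoints. Setting $\vartheta=0$ yields the stopping value $-e^{-\gamma g(i)}$. For the limit I would distinguish the sign of $q_i-c\gamma$. If $q_i\le c\gamma$, then using $W_k\le0$ the square bracket is nonpositive, the objective is nonincreasing, and the supremum is $-e^{-\gamma g(i)}$; this gives $W_k(i)=-e^{-\gamma g(i)}$ for all $k$ and $\tau^*\equiv0$, proving the first assertion. If $q_i>c\gamma$, then $e^{(c\gamma-q_i)\vartheta}\to0$ and the integral converges, so letting $\vartheta\to\infty$ produces the continuation value $\sum_{j\neq i}\tfrac{q_{ij}}{q_i-c\gamma}W_k(j)$; checking that ``square bracket $>0$'' is equivalent to this continuation value exceeding $-e^{-\gamma g(i)}$ yields exactly the maximum in \eqref{eq:bellman-equation_exp1}. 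Since the optimal $\vartheta$ is $0$ when the first term dominates and ``$\infty$'' otherwise, the maximizer $h_k^*$ of Theorem \ref{thm:bellman}(b) collapses to the $\{0,\infty\}$-valued map $f_k^*$, and optimality of the associated stopping time is inherited directly from Theorem \ref{thm:bellman}(b).

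The part needing most care is the $\vartheta\to\infty$ endpoint: strictly speaking $\vartheta=\infty$ is not an admissible value of the outer supremum, so I would justify it as the attained limiting value (finite precisely because $q_i>c\gamma$ makes $\int_0^\infty e^{(c\gamma-q_i)s}\,ds$ converge) and interpret it as ``do not stop before the next jump.'' One must also keep the induction hypothesis $W_k\le0$ alive, which is immediate since both arguments of the maximum are nonpositive: $-e^{-\gamma g(i)}<0$, and $\sum_{j\neq i}\tfrac{q_{ij}}{q_i-c\gamma}W_k(j)\le0$ because $q_i-c\gamma>0$ and $q_{ij}\ge0$.
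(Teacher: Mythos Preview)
Your proof is correct and follows essentially the same route as the paper: both insert the exponential ansatz into the optimality equation of Theorem~\ref{thm:bellman}, factor out $e^{c\gamma t}$, and reduce the $\vartheta$-optimization to a choice between the two endpoints $\vartheta=0$ and $\vartheta\to\infty$. The only cosmetic difference is that the paper, after computing the integral, rewrites the objective (for $q_i>c\gamma$) as the convex combination $\alpha_i(\vartheta)(-e^{-\gamma g(i)})+(1-\alpha_i(\vartheta))\sum_{j\neq i}\tfrac{q_{ij}}{q_i-c\gamma}W_k(j)$ with $\alpha_i(\vartheta)=e^{(c\gamma-q_i)\vartheta}\in(0,1)$ and reads off the maximum directly, whereas you arrive at the same conclusion via the sign of the $\vartheta$-derivative; the two arguments are equivalent.
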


Note that this result in particular implies that for the exponential utility it is never optimal to stop between jump times of the Markov chain like in the risk-neutral case.

\begin{proof}
The proof is by induction on $k$. Note that for $k=0$ the statement is obvious. Suppose the statement is true for $k$. By Theorem \ref{thm:bellman} it holds that
\begin{eqnarray*}
% \nonumber to remove numbering (before each equation)
  V_{k+1}(t,i) &=&  \sup_{\vartheta\ge 0}\bigg\{ U\big(g(i)-c(t+\vartheta) \big)  e^{-q_i\vartheta} + \int_0^{\vartheta}e^{-q_i s} \sum_{j\neq i} q_{ij} V_k(t+s,j) \;ds \bigg\}\\
   &=&   e^{\gamma ct} \sup_{\vartheta\ge 0}\bigg\{ -e^{-\gamma g(i)} e^{\vartheta (\gamma c-q_i)} + \int_0^{\vartheta}e^{s(\gamma c-q_i )} \sum_{j\neq i} q_{ij} W_k(j) \;ds \bigg\}\\
   &=&  e^{\gamma ct} \sup_{\vartheta\ge 0}\bigg\{ -e^{-\gamma g(i)} e^{\vartheta (\gamma c-q_i)} + (1-e^{\vartheta(\gamma c-q_i)}) \sum_{j\neq i} \frac{q_{ij}}{q_i-\gamma c} W_k(j) \bigg\}.
\end{eqnarray*}
For $q_i \le c\gamma$ the expression
$$ \vartheta \mapsto -e^{-\gamma g(i)} e^{\vartheta (\gamma c-q_i)} + (1-e^{\vartheta(\gamma c-q_i)}) \sum_{j\neq i} \frac{q_{ij}}{q_i-\gamma c} W_k(j)$$
is decreasing (note that $W_k\le 0$) and thus $\vartheta^*=0$ is the maximum point.
For $q_i > c\gamma$ the expression is a convex combination
$$-e^{-\gamma g(i)} \alpha_i(\vartheta) + (1-\alpha_i(\vartheta)) \sum_{j\neq i} \frac{q_{ij}}{q_i-\gamma c} W_k(j)$$
with $\alpha_i(\vartheta)\in(0,1)$. Thus we obtain:
 $$  V_{k+1}(t,i)= e^{\gamma ct} W_{k+1}(i) =e^{\gamma ct} \max\Big\{ -e^{-\gamma g(i)}, \sum_{j\neq i} \frac{q_{ij}}{q_i-\gamma c} W_k(j)\Big\}$$
 which implies the result.
\end{proof}

\section{Infinite Horizon Problems}\label{sec:infiniteh}
Let us now consider the optimization problem \eqref{eq:problem} with unrestricted time horizon. First note that $V_n$ is increasing in $n$ since the stopping times which are available for $V_n$ are also available for $V_{n+1}$. The following theorem is valid for all domains of $U$ with the interpretation that $V(t,i)=-\infty$ if $t$ is not in the domain.

\begin{theorem}
The value function $V(t,i)$ of \eqref{eq:problem} satisfies for $(t,i)\in[0,\infty)\times S$
\begin{align}
   \nonumber         V(t,i) =&  (\mathrm{T}V)(t,i)\\ \label{eq:fixed point}
            =&\sup_{\vartheta\ge 0}\bigg\{ U\big(g(i)-c(t+\vartheta) \big)  e^{-q_i\vartheta} + \int_0^{\vartheta}e^{-q_i s} \sum_{j\neq i} q_{ij} V(t+s,j) \;ds \bigg\}.
          \end{align}
Moreover, there exists a maximizer $h^*(t,i)$ of $V(t,i)$ in \eqref{eq:fixed point}, i.e.
\begin{equation}\label{eq:fixed_point_optimal}
            V(t,i) = U\big(g(i)-c(t+ h^*(t,i)) \big)  e^{-q_i\; h^*(t,i)} + \int_0^{h^*(t,i)}e^{-q_i s} \sum_{j\neq i} q_{ij} V(t+s,j) \;ds.
          \end{equation}
\end{theorem}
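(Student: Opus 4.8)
The plan is to obtain $V$ as the monotone limit of the finite-horizon value functions $V_n$ and then to pass to the limit in the Bellman equation of Theorem~\ref{thm:bellman}. Since every stopping time admissible for $V_n$ is also admissible for $V_{n+1}$, the sequence $V_n(t,i)$ is increasing in $n$ and, being bounded above by $a\,\sup_{\tau\in\Sigma^M}\Eop_i\big[g^+(X_\tau)-c\tau\big]+b<\infty$ thanks to (A1), it converges pointwise to a measurable limit $V_\infty(t,i)$ which is again decreasing in $t$. The first step is to identify $V_\infty=V$.

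For $V_\infty\le V$ I note that for any $\tau\in\Sigma^M$ the truncation $\tau\wedge S_n$ again lies in $\Sigma^M$ and is finite, so that $V_n(t,i,\tau)=\Eop_i\big[U(g(X_{\tau\wedge S_n})-c(t+\tau\wedge S_n))\big]$ is exactly the infinite-horizon payoff of $\tau\wedge S_n$; hence $V_n(t,i,\tau)\le V(t,i)$, and taking the supremum gives $V_n(t,i)\le V(t,i)$. For the converse I invoke assumption (A2): for each fixed $\tau\in\Sigma^M$ it yields $\liminf_n V_n(t,i,\tau)\ge \Eop_i\big[U(g(X_\tau)-c(t+\tau))\big]$, and since $V_n(t,i)\ge V_n(t,i,\tau)$ I get $V_\infty(t,i)=\lim_n V_n(t,i)\ge \Eop_i\big[U(g(X_\tau)-c(t+\tau))\big]$. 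Taking the supremum over $\tau$ gives $V_\infty\ge V$, hence $V=V_\infty$.

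Next I pass to the limit in the recursion $V_{k+1}=\mathbf{T}V_k$ of Theorem~\ref{thm:bellman}. The operator $\mathbf{T}$ is monotone, because $v$ enters only through the integral with nonnegative weights $q_{ij}$; thus $V_{n+1}=\mathbf{T}V_n\le\mathbf{T}V$, and letting $n\to\infty$ gives $V\le\mathbf{T}V$. For the reverse inequality I fix $\vartheta\ge 0$ and use that the term under the supremum in $\mathbf{T}V_n$ is bounded above by $V_{n+1}(t,i)\le V(t,i)$. Since stopping immediately is always admissible, $V_n\ge V_0$, so the differences $V_n-V_0\ge 0$ increase to $V-V_0$, and by Tonelli together with monotone convergence
\[
\int_0^{\vartheta}e^{-q_i s}\sum_{j\neq i}q_{ij}V_n(t+s,j)\,ds \longrightarrow \int_0^{\vartheta}e^{-q_i s}\sum_{j\neq i}q_{ij}V(t+s,j)\,ds .
\]
Therefore $U\big(g(i)-c(t+\vartheta)\big)e^{-q_i\vartheta}+\int_0^{\vartheta}e^{-q_i s}\sum_{j\neq i}q_{ij}V(t+s,j)\,ds\le V(t,i)$, and taking the supremum over $\vartheta$ yields $\mathbf{T}V\le V$, which together with the previous inequality establishes the fixed point equation \eqref{eq:fixed point}.

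Finally, the existence of a maximizer is inherited from the finite-horizon analysis. Since $V$ is measurable, decreasing in $t$ and bounded above just as the elements of $\mathbb{M}_n$, the measurable selection argument in the proof of Lemma~\ref{lem:M1} (for $dom(U)=\R$), respectively Lemma~\ref{lem:M2} (for restricted domain), applies verbatim with $v=V$: the map $(t,\vartheta)\mapsto m(t,\vartheta)$ is continuous in $\vartheta$ because $s\mapsto V(t+s,j)$ is monotone with at most countably many jumps, and the measurable selection theorem of \cite{Brown1973} (resp. the compactness argument of Lemma~\ref{lem:M2}) furnishes a measurable $h^*$ attaining the supremum, which is precisely \eqref{eq:fixed_point_optimal}. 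I expect the main obstacle to be the interchange of limit and operator in the third step, i.e. justifying the passage of the limit through the countable sum and the integral; the decomposition $V_n=V_0+(V_n-V_0)$ combined with monotone convergence is what makes this rigorous, while the potential unboundedness below of $V_0$ on a restricted domain is harmless, since it renders both sides $-\infty$ for the affected values of $\vartheta$.
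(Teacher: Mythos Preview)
Your proof is correct and follows essentially the same route as the paper's: identify $V$ as the monotone limit $V_\infty=\lim_n V_n$ via (A2), use monotonicity of $\mathbf{T}$ for $V\le\mathbf{T}V$, pass to the limit for a fixed $\vartheta$ via monotone convergence for the reverse inequality, and invoke Lemmas~\ref{lem:M1}/\ref{lem:M2} for the maximizer. The only addition is your explicit decomposition $V_n=V_0+(V_n-V_0)$ to justify the interchange of limit and integral, which the paper glosses over with a bare reference to monotone convergence.
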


\begin{proof}
Since $V_n$ is increasing in $n$ we can define $V_\infty(t,i) := \lim_{n\to\infty} V_n(t,i)$. Obviously we have $V(t,i) \ge V_n(t,i)$ for all $n\in\N$ and thus $V(t,i) \ge V_\infty(t,i)$. On the other hand we have by assumption (A2) for any $\tau\in\Sigma$:
$$ V_\infty(t,i) = \liminf_{n\to\infty}V_n(t,i) \ge  \liminf_{n\to\infty}V_n(t,i,\tau) \ge V(t,i,\tau)$$
which implies that $V_\infty(t,i) \ge V(t,i)$. Altogether we have shown that $V=V_\infty$. Next observe that the $\mathrm{T}$-operator is monotone, i.e. $v\le w$ implies that $\mathrm{T}v\le \mathrm{T}w$. Hence with Theorem \ref{thm:bellman} we obtain that for all $k\in\N$:
 \begin{equation*}
            V_{k+1}(t,i)    =  (\mathrm{T}V_k)(t,i) \le (\mathrm{T}V)(t,i)
                      \end{equation*}
which implies that $V(t,i) \le  (\mathrm{T}V)(t,i)$. Now on the other hand we have
\begin{eqnarray*}
            V_{k+1}(t,i) & = &  (\mathrm{T}V_k)(t,i)\\
             &\ge & U\big(g(i)-ct-c\vartheta \big)  e^{-q_i \vartheta} + \int_0^{\vartheta}e^{-q_i s} \sum_{j\neq i} q_{ij} V_k(t+s,j) \;ds
\end{eqnarray*}
for all $\vartheta\ge 0$. Hence we obtain with monotone convergence that
\begin{eqnarray*}
            V(t,i) &=&  \lim_{k\to\infty} V_{k+1}(t,i)\\
            &\ge & U\big(g(i)-ct-c\vartheta \big)  e^{-q_i\vartheta} + \int_0^{\vartheta}e^{-q_i s} \sum_{j\neq i} q_{ij}\lim_{k\to\infty}  V_k(t+s,j) \;ds\\
            &= &  U\big(g(i)-ct-c\vartheta \big)  e^{-q_i\vartheta} + \int_0^{\vartheta}e^{-q_i s} \sum_{j\neq i} q_{ij} V(t+s,j) \;ds.
\end{eqnarray*}
for all $\vartheta\ge 0$. Thus it follows that $V(t,i) \ge (\mathrm{T}V)(t,i)$ which implies the statement.

The existence of a maximizer follows from Lemma \ref{lem:M1} and from Lemma \ref{lem:M2} respectively, since in case $U$ is defined on $\R$ we get that
\begin{eqnarray*}
% \nonumber to remove numbering (before each equation)
  V  &\in & \big\{ v\in \mathbb{M} \; |\; v(t,i) \le a\sup_{\tau \in \Sigma^M} \Eop_i\big[g^+(X_{\tau})-c(t+\tau)\big] +b,\\
   &&  \quad\quad t\mapsto v(t,i) \mbox{ is decreasing  for all } i\in S\big\}.
\end{eqnarray*}
and in case $U$ is defined on a subset $[d,\infty)$
\begin{eqnarray*}
% \nonumber to remove numbering (before each equation)
  V &\in& \big\{ v(\cdot,i): [0,d(i)] \to \R \mbox{ measurable for } i\in S\; |\; v(t,i) \le a\sup_{\tau \in \Sigma^M} \Eop_i\big[g^+(X_{\tau})-c(t+\tau)\big] +b,\\
   &&\quad\quad \; t\mapsto v(t,i) \mbox{ is decreasing for all } i\in S\big\}.
\end{eqnarray*}
The case that $U$ is defined on a subset $(d,\infty)$ is similar.
\end{proof}

An optimal stopping time is now obtained as follows:

\begin{theorem}\label{thm:tau*opt}
Let $h^*$ be the maximizer in \eqref{eq:fixed point} and suppose $\tau^* := (h^*,h^*,\ldots)$ satisfies \linebreak $\Pop_i(\tau^*<\infty)=1$ for $i\in S$ and
\begin{equation}\label{eq:optimal-stopping-time-unrestricted-case-condition}
    \lim_{n\to\infty} \Eop_i\Big[V(S_n+t,Z_n) 1_{\{\tau^* \ge S_n\}}\Big] = 0.
  \end{equation}
Then $\tau^*$ is an optimal stopping time for problem \eqref{eq:problem}.
\end{theorem}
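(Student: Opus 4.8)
The plan is to run a verification argument. Since $V(t,i)=\sup_{\tau\in\Sigma^M}V(t,i,\tau)\ge V(t,i,\tau^*)$ holds trivially, it suffices to prove the reverse inequality, and I will in fact establish the equality $V(t,i,\tau^*)=V(t,i)$. The engine is an exact identity relating $V$ to the truncated rewards $V_n(t,i,\tau^*)$ of $\tau^*$, valid for every $n$, from which the claim follows by letting $n\to\infty$.

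To produce this identity, note first that because $\tau^*=(h^*,h^*,\ldots)$ we have $\overrightarrow{\tau^*}=\tau^*$, so the reward iteration of Theorem \ref{thm:reward-iteration} reads
\begin{equation*}
V_n(t,i,\tau^*) = U\big(g(i)-c(t+h^*(t,i))\big)e^{-q_i h^*(t,i)} + \int_0^{h^*(t,i)}e^{-q_i s}\sum_{j\neq i}q_{ij}V_{n-1}(t+s,j,\tau^*)\,ds.
\end{equation*}
Since $h^*$ is the maximizer in \eqref{eq:fixed_point_optimal}, the value $V$ satisfies the identical equation with $V$ in place of both $V_n$ and $V_{n-1}$. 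Subtracting and writing $D_n(t,i):=V(t,i)-V_n(t,i,\tau^*)$ gives the clean recursion
\begin{equation*}
D_n(t,i) = \int_0^{h^*(t,i)}e^{-q_i s}\sum_{j\neq i}q_{ij}D_{n-1}(t+s,j)\,ds, \qquad D_0(t,i)=V(t,i)-U\big(g(i)-ct\big).
\end{equation*}
The integral here is exactly the conditional expectation over the first jump restricted to the event $\{\tau^*\ge S_1\}$ that a jump precedes stopping, since the joint law of $(S_1,Z_1)$ carries ``density'' $e^{-q_i s}q_{ij}$. Iterating $n$ times — the successive events composing, by construction of $\tau^*$ from the single rule $h^*$, into $\{\tau^*\ge S_n\}$ — yields $D_n(t,i)=\Eop_i\big[D_0(t+S_n,Z_n)1_{\{\tau^*\ge S_n\}}\big]$. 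Splitting the decomposition $V_n(t,i,\tau^*)=\Eop_i[U(g(X_{\tau^*})-c(t+\tau^*))1_{\{\tau^*<S_n\}}]+\Eop_i[U(g(Z_n)-c(t+S_n))1_{\{\tau^*\ge S_n\}}]$ and adding $D_n$, the two terms carrying $U(g(Z_n)-c(t+S_n))1_{\{\tau^*\ge S_n\}}$ cancel, and I obtain the verification identity
\begin{equation*}
V(t,i) = \Eop_i\big[U\big(g(X_{\tau^*})-c(t+\tau^*)\big)1_{\{\tau^*<S_n\}}\big] + \Eop_i\big[V(t+S_n,Z_n)1_{\{\tau^*\ge S_n\}}\big].
\end{equation*}

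It remains to pass to the limit $n\to\infty$. The second term vanishes precisely by hypothesis \eqref{eq:optimal-stopping-time-unrestricted-case-condition}. For the first, $\Pop_i(\tau^*<\infty)=1$ together with $S_n\to\infty$ forces $1_{\{\tau^*<S_n\}}\uparrow 1$ almost surely, so the integrand converges to $U(g(X_{\tau^*})-c(t+\tau^*))$. I expect this interchange of limit and expectation to be the only delicate point, since $U$ may be unbounded below; I would handle it by splitting $U(\cdots)=Y^+-Y^-$ and applying monotone convergence to each part separately. The linear bound $U(x)\le ax+b$ together with (A1) makes $\Eop_i[Y^+]<\infty$, so the two monotone limits combine to give $\lim_n\Eop_i[U(g(X_{\tau^*})-c(t+\tau^*))1_{\{\tau^*<S_n\}}]=V(t,i,\tau^*)$ in the extended sense. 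Hence $V(t,i)=V(t,i,\tau^*)$, which proves that $\tau^*$ is optimal.
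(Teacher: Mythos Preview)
Your proof is correct and follows essentially the same route as the paper's. The paper establishes by induction on $n$ the identity
\[
V(t,i)=\Eop_i\!\left[U\big(g(X_{\tau^*})-c(t+\tau^*)\big)1_{\{\tau^*<S_n\}}+V(S_n+t,Z_n)1_{\{\tau^*\ge S_n\}}\right],
\]
quoting the fixed-point equation \eqref{eq:fixed_point_optimal} for the base case and the reward-iteration argument of Theorem~\ref{thm:reward-iteration} for the step; it then takes $n\to\infty$ using the hypothesis and monotone convergence, just as you do. Your derivation of the same identity via the difference $D_n=V-V_n(\cdot,\cdot,\tau^*)$ is merely a repackaging of that induction, and your slightly more explicit treatment of the limit (splitting $U(\cdots)$ into positive and negative parts) is if anything a bit more careful than the paper's one-line appeal to monotone convergence.
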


\begin{proof}
First note that it can be shown by induction on $n$ that
\begin{align*}
    V(t,i) &= (\mathrm{T}^nV)(t,i)\\
           &= \Eop_i\left[ U\left(g\big(X_{\tau^*}\big) -c(t+\tau^*) \right) 1_{\{\tau^*<S_n\}}
                        +V(S_n +t,Z_n) 1_{\{\tau^*\ge S_n\}} \right].
  \end{align*}
For $n=1$ this is exactly \eqref{eq:fixed_point_optimal}:
\begin{equation*}
            V(t,i) = U\big(g(i)-c(t+h^*(t,i)) \big)  e^{-q_i\; h^*(t,i)} + \int_0^{h^*(t,i)}e^{-q_i s} \sum_{j\neq i} q_{ij} V(t+s,j) \;ds.
\end{equation*}
The induction step follows like in the proof of Theorem \ref{thm:reward-iteration}.

Since $S_n\to\infty$ $\Pop_i$-a.s.\ and since $\Pop_i(\tau^*<\infty)=1$, we know that
$1_{\{\tau^* < S_n\}} \uparrow 1 \quad\P_i\text{--a.s.}$.  An application of the monotone convergence theorem as well as condition \eqref{eq:optimal-stopping-time-unrestricted-case-condition} yields
  \begin{align*}
    V(t,i) &= \lim_{n\to\infty} \Eop_i\left[ U\left(g\big(X_{\tau^*}\big) -ct-c\tau^* \right) 1_{\{\tau^*<S_n\}}
                        +V(S_n +t,Z_n) 1_{\{\tau^*\ge S_n\}} \right]\\
            &= \Eop_i\left[ U\left(g\big(X_{\tau^*}\big) -ct-c\tau^* \right)\right].
  \end{align*}
  Hence, $\tau^\star\in\Sigma^M$ fulfills $ V(t,i) = \sup_{\tau\in\Sigma^M} V(t,i,\tau) = V(t,i,\tau^*)$
  and thus is an optimal stopping time for problem \eqref{eq:problem}.
\end{proof}

\begin{remark}\label{rem:consist}
The consistency condition formulated in Lemma \ref{lem:consist} also holds for the maximizer $h^*$ in \eqref{eq:fixed point}.
\end{remark}

\subsection{Exponential utility}
Let us now again consider the special case $U(x)=-e^{-\gamma x}$ for $\gamma>0$ and $x\in\R$.  From the finite horizon case we obtain here that $V(t,i)=e^{c\gamma t} W(i)$ with a function $W:S\to (-\infty,0]$. Here we obtain

\begin{theorem}\label{thm:exp1}  If $q_i\le c\gamma$ we obtain that $W(i)= -e^{-\gamma g(i)}$ and $\tau^*\equiv 0$ is optimal. Now suppose that $q_i > c\gamma$. Then it holds:
  \begin{itemize}
  \item[a)]  For  $(t,i)\in [0,\infty)\times S$ it holds that
          \begin{align}\label{eq:bellman-equation-exp}
            W(i) =& \max\bigg\{-e^{-\gamma g(i)}, \sum_{j\neq i} \frac{q_{ij}}{q_i-c\gamma} W(j) \bigg\}.
          \end{align}
  \item[b)] There exists a maximizer $f^*:S\to \{0,\infty\}$ (where we set $f^*(i):=0$ if the maximum in \eqref{eq:bellman-equation-exp} is attained in the first expression and $f^*(i):=\infty$ if the maximum is attained in the second expression). Let
      \begin{equation*}
   \tau^* = f^*(Z_0) 1_{\{\tau^* < S_1\}} + \sum_{k=1}^{\infty} (f^*(Z_k)+S_k) 1_{\{S_k\le \tau^* < S_{k+1}\}},\quad \Pop-a.s.
\end{equation*}
       and suppose that for $i\in S$,  $\Pop_i(\tau^*<\infty)=1$ and
\begin{equation*}\label{eq:optimal-stopping-time-unrestricted-case-condition}
    \lim_{n\to\infty} \Eop_i\Big[W(Z_n) 1_{\{\tau^* \ge S_n\}}\Big] = 0.
  \end{equation*}
Then $\tau^*$ is the optimal stopping time for the infinite horizon stopping problem with exponential utility.
  \end{itemize}
\end{theorem}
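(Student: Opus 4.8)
The plan is to specialize the two general infinite-horizon results — the preceding fixed-point theorem (giving $V=\mathrm{T}V$ and existence of a maximizer) and Theorem \ref{thm:tau*opt} (optimality of $\tau^*=(h^*,h^*,\ldots)$) — to the exponential utility, mirroring the computation already carried out in the finite-horizon exponential case. I take as given, exactly as the paper notes by inheriting it from $V=\lim_n V_n$ together with $V_n(t,i)=e^{c\gamma t}W_n(i)$, that the value function has the product form $V(t,i)=e^{c\gamma t}W(i)$ with $W:S\to(-\infty,0]$.

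First I would establish part (a). Substituting $U(x)=-e^{-\gamma x}$ and $V(t,i)=e^{c\gamma t}W(i)$ into the fixed-point equation \eqref{eq:fixed point}, I use $U\big(g(i)-c(t+\vartheta)\big)e^{-q_i\vartheta}=-e^{-\gamma g(i)}e^{c\gamma t}e^{\vartheta(\gamma c-q_i)}$ and $V(t+s,j)=e^{c\gamma(t+s)}W(j)$, factor out $e^{c\gamma t}$, and evaluate $\int_0^\vartheta e^{s(\gamma c-q_i)}\,ds$. Since $V=\mathrm{T}V$ holds pointwise, the factor $e^{c\gamma t}$ cancels and the equation reduces to
\begin{equation*}
  W(i)=\sup_{\vartheta\ge 0}\Big\{-e^{-\gamma g(i)}\,\alpha_i(\vartheta)+\big(1-\alpha_i(\vartheta)\big)\sum_{j\neq i}\frac{q_{ij}}{q_i-\gamma c}W(j)\Big\},\qquad \alpha_i(\vartheta):=e^{\vartheta(\gamma c-q_i)},
\end{equation*}
which is the exact analogue of the display in the finite-horizon exponential proof with $V_k,W_k$ replaced by $V,W$.

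Next I split according to the sign of $q_i-c\gamma$. If $q_i\le c\gamma$ then $\alpha_i$ is nondecreasing with $\alpha_i\ge 1$, and since $W\le 0$ the objective is decreasing in $\vartheta$; hence $\vartheta^*=0$, giving $W(i)=-e^{-\gamma g(i)}$ and immediate stopping, $\tau^*\equiv 0$. If $q_i>c\gamma$ then $\alpha_i(\vartheta)$ decreases from $1$ at $\vartheta=0$ to $0$ as $\vartheta\to\infty$, so the objective is an affine function of $\alpha_i$ sweeping the whole interval $(0,1]$; its supremum equals the larger of the two endpoint values, namely $\max\{-e^{-\gamma g(i)},\sum_{j\neq i}\frac{q_{ij}}{q_i-c\gamma}W(j)\}$, which is \eqref{eq:bellman-equation-exp}. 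The maximizer is attained at $\vartheta=0$ when the first term wins and as $\vartheta\to\infty$ when the second wins, which is precisely the definition of $f^*(i)\in\{0,\infty\}$ in part (b); in particular an optimal rule never stops strictly between jumps, recovering the risk-neutral phenomenon.

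Finally, for the optimality of $\tau^*$ in part (b), I would invoke Theorem \ref{thm:tau*opt} with $h^*(t,i)=f^*(i)$: once $\Pop_i(\tau^*<\infty)=1$ and the transversality condition \eqref{eq:optimal-stopping-time-unrestricted-case-condition} hold, $\tau^*$ is optimal. The step requiring genuine care, and the one I expect to be the main obstacle, is translating that transversality condition into the stated hypothesis on $W$. Substituting $V(S_n+t,Z_n)=e^{c\gamma(S_n+t)}W(Z_n)$ shows that \eqref{eq:optimal-stopping-time-unrestricted-case-condition} amounts to $\Eop_i[e^{c\gamma S_n}W(Z_n)1_{\{\tau^*\ge S_n\}}]\to 0$; the exponential growth factor $e^{c\gamma S_n}$ is exactly what must be controlled against the decay of $1_{\{\tau^*\ge S_n\}}$, on which every visited state has chosen $f^*=\infty$, and one must verify that the stated condition delivers this while justifying the monotone passage to the limit. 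Granting this, the identity $V(t,i)=\Eop_i[U(g(X_{\tau^*})-c(t+\tau^*))]$ follows as in the proof of Theorem \ref{thm:tau*opt}, establishing optimality.
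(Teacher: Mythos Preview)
Your approach is essentially identical to the paper's: the paper's proof is a two-line argument that (i) for $q_i\le c\gamma$ takes the limit of the finite-horizon identity $V_k(t,i)=-e^{c\gamma t}e^{-\gamma g(i)}$, (ii) for $q_i>c\gamma$ plugs the product form $V(t,i)=e^{c\gamma t}W(i)$ into the fixed-point equation \eqref{eq:fixed point} and reuses the finite-horizon computation verbatim, and (iii) defers the optimality of $\tau^*$ entirely to Theorem~\ref{thm:tau*opt}. You reproduce exactly this, with the computation spelled out.

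Your flagged concern is genuine and worth noting: Theorem~\ref{thm:tau*opt} requires $\Eop_i[V(S_n+t,Z_n)1_{\{\tau^*\ge S_n\}}]\to 0$, which under $V(t,i)=e^{c\gamma t}W(i)$ reads $e^{c\gamma t}\Eop_i[e^{c\gamma S_n}W(Z_n)1_{\{\tau^*\ge S_n\}}]\to 0$, not the weaker condition $\Eop_i[W(Z_n)1_{\{\tau^*\ge S_n\}}]\to 0$ stated in the theorem. The paper's proof does not comment on this discrepancy; it simply writes ``the remainder then follows from Theorem~\ref{thm:tau*opt}.'' So your proof is at least as complete as the paper's on this point, and you are right to mark it as the place where real work would be needed.
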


\begin{proof}
If $q_i\le c\gamma$ we have $V_k(t,i)=-e^{c\gamma t} e^{-\gamma g(i)}$. Thus,
$$V(t,i)= \lim_{k\to\infty} V_k(t,i) = -e^{c\gamma t} e^{-\gamma g(i)} = e^{c\gamma t} W(i)$$
and $\tau^*\equiv 0$ is optimal. If $q_i>c\gamma$ we plug $V(t,i)= e^{c\gamma t} W(i)$ into \eqref{eq:fixed point} to obtain \eqref{eq:bellman-equation-exp}. The remainder then follows from Theorem \ref{thm:tau*opt}.
\end{proof}

\section{Sufficient Conditions for Immediate Stopping}
In this section we will give sufficient conditions for utility functions $U$ with $dom(U)=\R$ which are continuously differentiable, under which it is optimal to stop directly after a jump. For this purpose define
for $t\ge 0$:

\begin{align}
      S^0_t := \Big\{ i\in S \; \Big|\; &\sum_{j\neq i} \frac{q_{ij}}{q_i} U\big(g(j)-c\vartheta\big) \le U\big(g(i)-c\vartheta\big) + \frac{c}{q_i} U'\big(g(i)-c\vartheta\big) \quad\text{ for all } \vartheta\ge t \Big\}.\label{eq:S-infty-set}
\end{align}
Note that for $0\le t\le t'$ we obtain $S^0_t \subseteq S^0_{t'}$. It holds:

\begin{theorem}\label{thm:OLA}
 Suppose that for all $t\ge 0$, $i\in S^0_t$ and $j\neq i$ the implication
  \begin{equation}\label{eq:closure-assumption}
    q_{ij}\neq 0 \Longrightarrow j\in S^0_{t}
  \end{equation}
  is valid. Then the maximizer in \eqref{eq:fixed point} is given by $h^*(t',i) = 0$ for every $t'\ge t$ and $V(t,i)=U\big(g(i)-ct\big)$ for all $i\in S^0$.
\end{theorem}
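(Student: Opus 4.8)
The plan is to combine the value-iteration representation $V=\lim_{n\to\infty}V_n$ established in the proof of the infinite-horizon fixed-point theorem with a direct monotonicity analysis of the objective inside the $\mathbf{T}$-operator. First I would record the trivial lower bound: choosing $\vartheta=0$ in \eqref{eq:fixed point} gives $V(t',i)\ge U\big(g(i)-ct'\big)$ for every $(t',i)$, so only the reverse inequality on $S^0_{t'}$ needs to be shown.

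The heart of the argument is the claim that $V_n(t',i)=U\big(g(i)-ct'\big)$ for every $n\in\N$ and every $i\in S^0_{t'}$, which I would prove by induction on $n$. The base case is the definition $V_0(t',i)=U\big(g(i)-ct'\big)$. For the step, fix $i\in S^0_{t'}$ and a neighbour $j\neq i$ with $q_{ij}\neq 0$. The closure assumption \eqref{eq:closure-assumption} gives $j\in S^0_{t'}$, and since $S^0_\cdot$ is increasing in time we get $j\in S^0_{t'+s}$ for all $s\ge 0$; the induction hypothesis then yields $V_n(t'+s,j)=U\big(g(j)-c(t'+s)\big)$. Substituting this into $V_{n+1}(t',i)=(\mathbf{T}V_n)(t',i)$ from Theorem \ref{thm:bellman}, the function to be maximised becomes
$$ m(\vartheta)=U\big(g(i)-c(t'+\vartheta)\big)e^{-q_i\vartheta}+\int_0^{\vartheta}e^{-q_i s}\sum_{j\neq i}q_{ij}\,U\big(g(j)-c(t'+s)\big)\,ds. $$
Differentiating $m$, which is where I use $U\in C^1(\R)$, gives
$$ m'(\vartheta)=e^{-q_i\vartheta}\Big[\sum_{j\neq i}q_{ij}\,U\big(g(j)-c(t'+\vartheta)\big)-q_i\,U\big(g(i)-c(t'+\vartheta)\big)-c\,U'\big(g(i)-c(t'+\vartheta)\big)\Big]. $$
Dividing the bracket by $q_i>0$ and setting $\vartheta'=t'+\vartheta\ge t'$, the condition $m'(\vartheta)\le 0$ is precisely the defining inequality of $S^0_{t'}$. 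Hence $m$ is non-increasing, its supremum is attained at $\vartheta=0$, and $V_{n+1}(t',i)=m(0)=U\big(g(i)-ct'\big)$, closing the induction. Letting $n\to\infty$ yields $V(t',i)=U\big(g(i)-ct'\big)$ for all $i\in S^0_{t'}$, in particular for $i\in S^0_t$ and every $t'\ge t$ by the inclusion $S^0_t\subseteq S^0_{t'}$. Since the identical substitution applies to $V$ itself, the objective in \eqref{eq:fixed point} equals the same non-increasing $m$, so $\vartheta=0$ is a maximiser and $h^*(t',i)=0$.

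I expect the main obstacle to be exactly the point that removes the apparent circularity. The set $S^0_t$ is defined through the immediate-stopping rewards $U\big(g(j)-c\vartheta\big)$ of the neighbours, whereas the objective in the $\mathbf{T}$-operator contains the a priori unknown values $V(t'+s,j)$. The closure condition \eqref{eq:closure-assumption}, together with the time-monotonicity $S^0_t\subseteq S^0_{t'}$, is precisely what permits replacing $V_n(t'+s,j)$ by $U\big(g(j)-c(t'+s)\big)$ for every relevant neighbour at each stage of the induction; without it the derivative condition would no longer coincide with the defining inequality of $S^0_{t'}$ and the one-step-look-ahead conclusion could break down.
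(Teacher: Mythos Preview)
Your proposal is correct and follows essentially the same route as the paper: induction on the finite-horizon value iteration $V_n$, substitution of the neighbour values by their immediate-stopping rewards via the closure hypothesis \eqref{eq:closure-assumption} together with the time-monotonicity $S^0_t\subseteq S^0_{t'}$, and then the derivative computation that identifies $m'(\vartheta)\le 0$ with the defining inequality of $S^0_{t'}$. Your write-up is in fact a bit more explicit than the paper about where the closure assumption enters and about passing the conclusion from $V_n$ to $V$ and then to the maximiser $h^*$ in \eqref{eq:fixed point}.
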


\begin{proof}
 We will prove by induction on $k\in\N$, that for any $t\ge 0$ and $i\in S^0_t$, the equality $V_k(t,i) = V_k(t,i,0) = U(g(i)-ct)$
  holds and the optimal stopping time is given by $\tau^*\equiv 0$. This then implies that $V(t,i)=U\big(g(i)-ct\big)$.

To this end, fix $t\ge 0$, $i\in S^0_t$ and suppose that $n=1$.  Consider
$$ m(t,\vartheta):= U\big(g(i)-c(t+\vartheta) \big)  e^{-q_i\vartheta} + \int_0^{\vartheta}e^{-q_i s} \sum_{j\neq i} q_{ij} U\big(g(j)-c(t+s)\big) \;ds. $$
Obviously $m$ is differentiable w.r.t.\ $\vartheta$ and we obtain
\begin{eqnarray*}
% \nonumber to remove numbering (before each equation)
  &&\frac{\partial}{\partial\vartheta} m(t,\vartheta) \le  0 \quad \Leftrightarrow \\
   &&  \sum_{j\neq i} \frac{q_{ij}}{q_i} U\big(g(j)-c(t+\vartheta)\big) \le U\big(g(i)-c(t+\vartheta)\big) + \frac{c}{q_i} U'\big(g(i)-c(t+\vartheta)\big) .
\end{eqnarray*}
Due to our assumption \eqref{eq:closure-assumption} this inequality is satisfied for all $\vartheta\ge 0$. Hence the maximizer in \eqref{eq:bellman-equation} for $k=0$ satisfies $h_1^*(t,i)=0$ and we obtain $V_1(t,i)=U\big(g(i)-ct\big)$. Now suppose the statement is true for $k\in\N$. The optimality equation in \eqref{eq:bellman-equation} then reads
 \begin{align}
V_{k+1}(t,i) =& \sup_{\vartheta\ge 0}\bigg\{ U\big(g(i)-ct-c\vartheta \big)  e^{-q_i\vartheta} + \int_0^{\vartheta}e^{-q_i s} \sum_{j\neq i} q_{ij} V_k(t+s,j) \;ds \bigg\}\\
=& \sup_{\vartheta\ge 0}\bigg\{ U\big(g(i)-ct-c\vartheta \big)  e^{-q_i\vartheta} + \int_0^{\vartheta}e^{-q_i s} \sum_{j\neq i} q_{ij} U\big(g(j)-c(t+s)\big) \;ds \bigg\}
\end{align}
which is again the same problem and we obtain $V_{k+1}(t,i)=U\big(g(i)-ct\big)$ and $h_k^*(t,i)=0$. Altogether we have shown the result.
\end{proof}

\begin{remark}
Instead of $S^0_t$ we can also consider
\begin{align*}
      S^\infty_t := \Big\{ i\in S \; \Big|\; &\sum_{j\neq i} \frac{q_{ij}}{q_i} U\big(g(j)-c\vartheta\big) > U\big(g(i)-c\vartheta\big) + \frac{c}{q_i} U'\big(g(i)-c\vartheta\big) \quad\text{ for all } \vartheta\ge t \Big\}.
\end{align*}
where we reverse the inequality. In this case we obtain for $i\in S_t^\infty$:
\begin{eqnarray*}
% \nonumber to remove numbering (before each equation)
  &&\frac{\partial}{\partial\vartheta} m(t,\vartheta) \ge  0 \quad \Leftrightarrow  \\
   && \sum_{j\neq i} \frac{q_{ij}}{q_i} V\big(g(j)-c(t+\vartheta)\big) \ge \sum_{j\neq i} \frac{q_{ij}}{q_i} U\big(g(j)-c(t+\vartheta)\big) \\
   && > U\big(g(i)-c(t+\vartheta)\big) + \frac{c}{q_i} U'\big(g(i)-c(t+\vartheta)\big)
\end{eqnarray*}
which implies that $h^*(t',i)=\infty$ for every $t'\ge t$, i.e. we will never stop as long as the Markov chain is in a state $i\in S_t^\infty$.
\end{remark}

\subsection{Exponential utility}
Let us now again consider the special case $U(x)=-e^{-\gamma x}$ for $\gamma>0$ and $x\in\R$. In this case the set  $S^0_t$ is independent of $t$ and given by:
\begin{align}
      S^0 :=\Big\{ i\in S \; |\; \sum_{j\neq i} \frac{q_{ij}}{q_i-c\gamma} e^{-\gamma g(j)} \ge e^{-\gamma g(i)} \text{ and } q_i>c\gamma
                 \quad\text{ or }\quad q_i\le c\gamma \Big\}.\label{eq:S-OLA-set}
    \end{align}
Here we obtain:
\begin{theorem}
 Suppose that for all $i\in S^0$ and $j\neq i$ the implication
  \begin{equation}\label{eq:closure-assumption-exp}
    q_{ij}\neq 0 \Longrightarrow j\in S^0
  \end{equation}
  is valid. Then the maximizer in  \eqref{eq:bellman-equation-exp} is for $i\in S^0$ given by $f^*(i) = 0$ and $W(i)=-e^{-\gamma g(i)}$. If $i\notin S^0$ then $f^*(i)=\infty$. Thus it holds that $$\tau^* = \inf\{ t\ge 0\; |\; X_t \in S^0\}.$$
\end{theorem}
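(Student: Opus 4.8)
The plan is to read off this result from Theorem~\ref{thm:OLA} and the Remark following it, after checking that in the exponential case the abstract sets $S^0_t$ and $S^\infty_t$ become time-independent and reduce to the explicit set $S^0$ of \eqref{eq:S-OLA-set}. First I would substitute $U(x)=-e^{-\gamma x}$ and $U'(x)=\gamma e^{-\gamma x}$ into the defining inequality of $S^0_t$ in \eqref{eq:S-infty-set}. Every term then carries the common positive factor $e^{\gamma c\vartheta}$, which cancels; the $\vartheta$-dependence disappears entirely, so for each fixed $i$ the inequality holds either for all $\vartheta\ge t$ or for none, whence $S^0_t=S^0$ for every $t\ge 0$. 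Multiplying the surviving inequality by $q_i>0$ and separating the cases $q_i>c\gamma$ and $q_i\le c\gamma$ (in the latter the left-hand side is negative while the right-hand side is nonnegative, so membership is automatic) reduces the condition to $\sum_{j\neq i}\frac{q_{ij}}{q_i-c\gamma}e^{-\gamma g(j)}\ge e^{-\gamma g(i)}$, which is exactly \eqref{eq:S-OLA-set}.

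Given this identification, the closure hypothesis \eqref{eq:closure-assumption-exp} is precisely \eqref{eq:closure-assumption} applied to the time-independent set $S^0$, so Theorem~\ref{thm:OLA} applies verbatim. It yields, for each $i\in S^0$, the maximizer $h^*(\cdot,i)\equiv 0$ and $V(t,i)=U(g(i)-ct)=-e^{-\gamma g(i)}e^{c\gamma t}$. Comparing with $V(t,i)=e^{c\gamma t}W(i)$ from Theorem~\ref{thm:exp1} gives $W(i)=-e^{-\gamma g(i)}$, and $h^*(t,i)=0$ is the statement $f^*(i)=0$, i.e.\ the first expression attains the maximum in \eqref{eq:bellman-equation-exp}.

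For $i\notin S^0$ I would use the complementary argument of the Remark. Such an $i$ necessarily has $q_i>c\gamma$ and satisfies the reverse strict inequality $\sum_{j\neq i}\frac{q_{ij}}{q_i-c\gamma}e^{-\gamma g(j)}<e^{-\gamma g(i)}$, which by the same cancellation is the time-independent membership condition for $S^\infty_t$. Since stopping immediately is always feasible, $V(t,j)\ge U(g(j)-ct)$ for all $(t,j)$; hence in $\frac{\partial}{\partial\vartheta}m(t,\vartheta)$ the continuation term $\sum_{j\neq i}\frac{q_{ij}}{q_i}V(t+\vartheta,j)$ dominates $\sum_{j\neq i}\frac{q_{ij}}{q_i}U(g(j)-c(t+\vartheta))$, which in turn strictly exceeds $U(g(i)-c(t+\vartheta))+\frac{c}{q_i}U'(g(i)-c(t+\vartheta))$. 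Thus $m(t,\cdot)$ is strictly increasing, its supremum is approached only as $\vartheta\to\infty$, and the maximizer is $f^*(i)=\infty$.

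It remains to identify $\tau^*$. With $f^*\equiv 0$ on $S^0$ and $f^*\equiv\infty$ off $S^0$, the associated decision rule stops at once whenever the chain sits in $S^0$ and otherwise waits for the next jump; since $X_t$ is piecewise constant and changes value only at the jump times $S_k$, the induced stopping time is the first entrance time $\tau^*=\inf\{t\ge 0:X_t\in S^0\}$. The only step needing genuine care is the cancellation in the first paragraph that renders $S^0_t$ independent of $t$ and matches the abstract set to \eqref{eq:S-OLA-set}; once that bookkeeping is done, both halves of the theorem follow immediately from the general results together with the feasibility bound $V(t,j)\ge U(g(j)-ct)$.
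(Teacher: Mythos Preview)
Your proof is correct and, for $i\in S^0$, matches the paper's exactly: both invoke Theorem~\ref{thm:OLA} after identifying $S^0_t$ with the time-independent set $S^0$ of \eqref{eq:S-OLA-set} (you spell out the cancellation of $e^{\gamma c\vartheta}$, which the paper takes for granted).

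For $i\notin S^0$ the two arguments differ slightly. You go back to the general fixed-point equation \eqref{eq:fixed point} and apply the Remark on $S^\infty_t$: since $i\in S^\infty_t$ and $V(t,j)\ge U(g(j)-ct)$, the derivative of $m(t,\cdot)$ is strictly positive, so $h^*(t,i)=\infty$. The paper instead stays at the level of the reduced equation \eqref{eq:bellman-equation-exp} from Theorem~\ref{thm:exp1} and argues in one line:
\[
-e^{-\gamma g(i)} \;<\; \sum_{j\neq i}\frac{q_{ij}}{q_i-c\gamma}\bigl(-e^{-\gamma g(j)}\bigr) \;\le\; \sum_{j\neq i}\frac{q_{ij}}{q_i-c\gamma}W(j),
\]
using the same feasibility bound $W(j)\ge -e^{-\gamma g(j)}$ that underlies your inequality $V\ge U(g-ct)$. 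Both routes are short and rest on the identical key ingredient; the paper's is a touch more direct because it exploits the already-simplified $W$-equation rather than re-deriving monotonicity of $m$, while yours has the advantage of making the connection to the general Remark explicit.
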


\begin{proof}
For $i\in S^0$ the statement follows from Theorem \ref{thm:OLA}. For $i\notin S^0$ observe that
$$ -e^{-\gamma g(i)} \le \sum_{j\neq i} \frac{q_{ij}}{q_i-c\gamma} (-e^{-\gamma g(j)}) \le \sum_{j\neq i} \frac{q_{ij}}{q_i-c\gamma}W(j)$$
which implies $f^*(i)=\infty$ due to Theorem \ref{thm:exp1}.
\end{proof}

\begin{remark}
Note that the set $S^0$ in this case can be written in a different way as
$$ S^0 =\Big\{ i\in S \; |\; \Eop_i[-e^{c\gamma S_1-\gamma g(Z_1)}] \le -e^{-\gamma g(x)} \text{ and } q_i>c\gamma
                 \quad\text{ or }\quad q_i\le c\gamma \Big\}$$
which means that we compare the expected utility we obtain when we stop immediately with the expected utility we obtain when we stop after the next jump time point of the Markov chain. In this case we can interpret the stopping rule as a 'one-step look ahead rule'.
\end{remark}

\begin{example}
Suppose that $(X_t)$ is a homogeneous Poisson process with intensity $\lambda>0$, i.e. $q_{ii+1}=\lambda$ and that $U(x)=-e^{-\gamma x}$. In this case
$$   S^0 =\Big\{ i\in \N_0 \; |\; \frac{\lambda}{\lambda-c\gamma} e^{-\gamma g(i+1)} \ge e^{-\gamma g(i)} \text{ and } \lambda >c\gamma
                 \quad\text{ or }\quad \lambda\le c\gamma \Big\}.$$
Thus, if $  \lambda\le c\gamma$ we have $S^0=\N_0$ which obviously satisfies condition \eqref{eq:closure-assumption-exp} and if $\lambda>c\gamma$ we have
$$   S^0 =\Big\{ i\in \N_0 \; |\; g(i+1)-g(i) \le \frac{1}{\gamma} \ln\Big(\frac{\lambda}{\lambda-c\gamma}\Big) \Big\}.$$
In case $g$ is concave we have that $i\mapsto g(i+1)-g(i)$ is decreasing and the set $S^0$ is of the form $S^0=\{\bar{i},\bar{i}+1,\ldots\}$ with $\bar{i}\in\N_0$ which satisfies \eqref{eq:closure-assumption-exp}. Hence it follows that the optimal stopping time is given by $\tau^* = \inf\{t\ge 0 \; |\; X_t \ge \bar{i}\}$.
\end{example}

\section{Influence of risk aversion}
Finally we discuss the influence of the risk attitude of the decision maker on the optimal stopping time. We use the Arrow-Pratt function of absolute risk aversion \eqref{BReq:AP} to measure the risk sensitivity and concentrate on utility functions $U$ which are defined on $\R$ and $U\in C^2(\R)$. A utility function $U$ is said to be {\em more risk averse} than a utility function $W$ if $l_U(x) \ge l_W(x)$ for all $x\in\R$. For our purpose it is crucial to note that a utility function $U$ is more risk averse than a utility function  $W$ if and only if, there exits an increasing concave function $r:\R\to\R$ such that $U=r\circ W$. In what follows we denote all quantities which refer to utility function $U$ by $h^*(t,i,U), V(t,i,U)$ etc. and similar for $W$.

\begin{theorem}\label{BRtheo:compstoppingrule}
Suppose that the utility function $U$ is more risk averse than the utility function  $W$. For all states $(t,i)\in\R_+\times S$ we obtain that $h^*(t,i,W) =0 $ implies $h^*(t,i,U)=0$, i.e. a more risk-averse decision maker will not stop later.
\end{theorem}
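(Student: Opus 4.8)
The plan is to use the characterization recalled just before the statement, namely that $U$ being more risk averse than $W$ is equivalent to $U = r\circ W$ for some increasing and concave $r:\R\to\R$, and to reduce the comparison of maximizers to a comparison of value functions via Jensen's inequality.

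First I would unpack the hypothesis $h^*(t,i,W)=0$. Since $h^*(\cdot,\cdot,W)$ is a maximizer in the fixed point equation \eqref{eq:fixed point} for $W$, the value $\vartheta=0$ attains the supremum there, and plugging $\vartheta=0$ into \eqref{eq:fixed_point_optimal} gives
$$ V(t,i,W) = W\big(g(i)-ct\big). $$
Because $V(t,i,W)=\sup_{\tau\in\Sigma^M}\Eop_i\big[W\big(g(X_\tau)-c(t+\tau)\big)\big]$, this says exactly that stopping immediately is optimal under $W$, i.e.
$$ \Eop_i\big[W\big(g(X_\tau)-c(t+\tau)\big)\big] \le W\big(g(i)-ct\big)\qquad\mbox{for all }\tau\in\Sigma^M. $$

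Next I would transfer this bound to $U$. Writing $U=r\circ W$ and applying Jensen's inequality to the concave function $r$, for every $\tau\in\Sigma^M$ I obtain
$$ \Eop_i\big[U\big(g(X_\tau)-c(t+\tau)\big)\big] = \Eop_i\Big[r\big(W(g(X_\tau)-c(t+\tau))\big)\Big] \le r\Big(\Eop_i\big[W(g(X_\tau)-c(t+\tau))\big]\Big). $$
Using the previous display together with the monotonicity of $r$, the right-hand side is at most $r\big(W(g(i)-ct)\big)=U\big(g(i)-ct\big)$. Taking the supremum over $\tau\in\Sigma^M$ yields $V(t,i,U)\le U\big(g(i)-ct\big)$; since the admissible choice $\tau\equiv 0$ attains the value $U\big(g(i)-ct\big)$, equality holds and $\vartheta=0$ is a maximizer in \eqref{eq:fixed point} for $U$, that is, $h^*(t,i,U)=0$.

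The only delicate point I anticipate is justifying that $r$ may be pulled out of the expectation, i.e. that $\Eop_i\big[W(g(X_\tau)-c(t+\tau))\big]$ is well defined and bounded above so that Jensen applies in the stated form; I would settle this using assumption (A1) together with the fact that the concave utilities $U$ and $W$ are dominated by affine functions. Apart from this integrability bookkeeping the argument is a short chain of inequalities once the representation $U=r\circ W$ is in hand.
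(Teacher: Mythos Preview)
Your argument is correct and is in fact more direct than the paper's. The paper proceeds by first establishing, via induction on the finite-horizon index $k$, the inequality $V_k(t,i,U)\le r\circ V_k(t,i,W)$ for all $(t,i)$: at each step Jensen's inequality is applied to the probability measure that weights $W\big(g(i)-c(t+\vartheta)\big)$ with mass $e^{-q_i\vartheta}$ and $V_k(t+s,j,W)$ with density $q_{ij}e^{-q_i s}$, and then the limit $k\to\infty$ yields $V(t,i,U)\le r\circ V(t,i,W)$; the conclusion $h^*(t,i,W)=0\Rightarrow h^*(t,i,U)=0$ is then read off from this inequality. You bypass the induction entirely by applying Jensen once, directly to the random variable $W\big(g(X_\tau)-c(t+\tau)\big)$ for an arbitrary $\tau\in\Sigma^M$, and then taking the supremum. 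What the paper's route buys is the intermediate relation $V(t,i,U)\le r\circ V(t,i,W)$ for \emph{every} $(t,i)$, not only those where $h^*(t,i,W)=0$; this stronger comparison is used implicitly in the subsequent remark (via Lemma~\ref{lem:consist}) to upgrade the conclusion to $h^*(t,i,W)\ge h^*(t,i,U)$ pointwise. Your approach, however, is perfectly sufficient for the theorem as stated, and the integrability caveat you flag is exactly the right one: since $W$ is dominated by an affine function, assumption~(A1) guarantees $\Eop_i[W(g(X_\tau)-c(t+\tau))]$ is well defined in $[-\infty,\infty)$, after which the supporting-line form of Jensen for concave increasing $r$ handles both the finite and the $-\infty$ cases.
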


\begin{proof}
Let $r$ be such that $U=r\circ W$. We first prove by induction on $k$ that $V_k(t,i,U)\le r\circ V_k(t,i,W)$ for all $(t,i)$ and $k$.
First for $k=0$ we have
\begin{equation*}
    V_0(t,i,U) = U\big( g(i)-ct\big) = r\circ W\big( g(i)-ct\big) = r\circ V_0(t,i,W).
\end{equation*}
Using the Jensen inequality, the induction hypothesis and the fact that $r$ is increasing and concave we obtain
\begin{eqnarray*}
% \nonumber to remove numbering (before each equation)
  &&V_{k+1}(t,i,U) =\\
  &=&  \sup_{\vartheta\ge 0}\bigg\{ U\big(g(i)-c(t+\vartheta) \big)  e^{-q_i\vartheta} + \int_0^{\vartheta}e^{-q_i s} \sum_{j\neq i} q_{ij} V_k(t+s,j,U) \;ds \bigg\}\\
   &\le& \sup_{\vartheta\ge 0}\bigg\{ r\circ W\big(g(i)-c(t+\vartheta) \big)  e^{-q_i\vartheta} + \int_0^{\vartheta}q_i e^{-q_i s} \sum_{j\neq i} \frac{q_{ij}}{q_i} r\circ V_k(t+s,j,W) \;ds \bigg\}\\
 &\le& \sup_{\vartheta\ge 0}\bigg\{ r\circ W\big(g(i)-c(t+\vartheta) \big)  e^{-q_i\vartheta} + r\circ \Big(\int_0^{\vartheta}q_i e^{-q_i s} \sum_{j\neq i} \frac{q_{ij}}{q_i} V_k(t+s,j,W) \;ds\Big) \bigg\}\\
      &=& r\circ \sup_{\vartheta\ge 0}\bigg\{ W\big(g(i)-c(t+\vartheta) \big)  e^{-q_i\vartheta} + \int_0^{\vartheta}q_i e^{-q_i s} \sum_{j\neq i} \frac{q_{ij}}{q_i} V_k(t+s,j,W) \;ds \bigg\} \\
   &=& r\circ  V_{k+1}(t,i,W).
\end{eqnarray*}
Letting $k\to\infty$ this yields $V(t,i,U)\le r \circ V(t,i,W)$.
This implies in particular that the inequality
\begin{eqnarray*}
% \nonumber to remove numbering (before each equation)
  W\big(g(i)-ct\big) &\ge& \sup_{\vartheta\ge 0}\bigg\{ W\big(g(i)-ct-c\vartheta \big)  e^{-q_i\vartheta} + \int_0^{\vartheta}q_i e^{-q_i s} \sum_{j\neq i} \frac{q_{ij}}{q_i} V(t+s,j,W) \;ds \bigg\}
  \end{eqnarray*}
leads to
\begin{eqnarray*}
% \nonumber to remove numbering (before each equation)
&& U\big(g(i)-ct\big)=  r\circ W\big(g(i)-ct\big) \\
&\ge& r\circ \sup_{\vartheta\ge 0}\bigg\{ W\big(g(i)-ct-c\vartheta \big)  e^{-q_i\vartheta} + \int_0^{\vartheta}q_i e^{-q_i s} \sum_{j\neq i} \frac{q_{ij}}{q_i} V(t+s,j,W) \;ds \bigg\}  \\
&=& r \circ V(t,i,W)\\
&\ge & V(t,i,U)\\
    &=& \sup_{\vartheta\ge 0}\bigg\{ U\big(g(i)-ct-c\vartheta \big)  e^{-q_i\vartheta} + \int_0^{\vartheta}e^{-q_i s} \sum_{j\neq i} q_{ij} V(t+s,j,U) \;ds \bigg\}
\end{eqnarray*}
By definition this means that $h^*(t,i,W)=0$ implies $h^*(t,i,U)=0$. Thus the statement follows.
\end{proof}

\begin{remark}
From Lemma \ref{lem:consist} it also follows that $h^*(t,i,W)\ge h^*(t,i,U)$ for all $t\ge 0$:  Suppose $h^*(t,i,W)=\delta >0$ and assume $h^*(t,i,U)>\delta$. This implies according to Remark \ref{rem:consist} that $h^*(t+\delta,i,W)=0$ and $h^*(t+\delta,i,W)=h^*(t,i,U)-\delta >0$ which is a contradiction to Theorem \ref{BRtheo:compstoppingrule} then. Thus we have $\tau^*(W) \ge_{st} \tau^*(U)$.
\end{remark}

\section{Risk-sensitive House selling problem}
In this section we consider the classical house selling problem in a continuous-time Markov chain setting. In order to have a reasonable model we consider the following special process $(X_t)$: Let $S=\{1,\ldots,m\}$ and the intensity matrix of $(X_t)$ be given by
$$ Q = \left( \begin{array}{cccc}
    -q_1 & \alpha_1 & \ldots & \alpha_m\\
    \alpha_1 & -q_2 & \ldots & \alpha_m\\
    \vdots & & \ddots &\\
    \alpha_1 & \ldots & \alpha_{m-1} & -q_m
    \end{array}\right)$$
with $\alpha_i > 0$ for all $i$ and $q_i := \sum_{j\neq i} \alpha_j$. Set $\alpha:= \sum_{j=1}^m \alpha_j$. Using the well-known uniformization technique (see e.g. \cite{cin75}) it follows immediately that $(X_t)$ is in distribution equal to the process $(\hat{X}_t)$ with
$$ \hat{X}_t = \sum_{k=0}^\infty Z_k \cdot 1_{\{S_k \le t<S_{k+1}\}}$$
where $Z_0=X_0$ and $Z_1, Z_2,\ldots$ are independent and identically distributed random variables with $\Pop(Z_k=i) = \frac{\alpha_i}{\alpha}$ and the random variables $S_1-S_0, S_2-S_1,\ldots$ are also independent and identically distributed random variables with $S_1-S_0\sim \exp(\alpha)$. The interpretation of $(X_t)$ is as follows: Suppose we want to sell a house. After an exponentially distributed amount of time a new offer for the house arrives. Offers are independent and identically distributed like $Z_1$. As long as the house is not sold, we have to pay for maintenance at rate $c>0$. Suppose $U$ is defined on $\R$ and let us consider the infinite horizon problem. We set $g(i)=i$. Thus, the problem is given by
\begin{equation}\label{eq:houseproblem} \sup_{\tau \in \Sigma^M} \Eop_i\big[ U\big(X_\tau-c\tau\big)\big].\end{equation}
The optimality equation applied to the uniformized model reads:
$$  V(t,i) =\sup_{\vartheta\ge 0}\bigg\{ U\big(i-c(t+\vartheta) \big)  e^{-\alpha\vartheta} + \int_0^{\vartheta}e^{-\alpha s} \sum_{j} \alpha_{j} V(t+s,j) \;ds \bigg\}.$$
As before we denote the maximizer by $h^*(t,i)$. Let us first make the following simple observation: $V(t,i)\le U(m-ct)$ and $h^*(t,m)\equiv 0$. The maximal reward for the house we can obtain is $m$. Thus, we will stop immediately when an offer of $m$ arrives, otherwise we can only get less. This explains the inequality. With this observation we obtain:

\begin{lemma}
The stopping time $\tau^* := (h^*,h^*,\ldots)$ satisfies $\Pop_i(\tau^*<\infty)=1$ for $i\in S$ and
\begin{equation}\label{eq:houselimit}
    \lim_{n\to\infty} \Eop_i\Big[V(S_n+t,Z_n) 1_{\{\tau^* \ge S_n\}}\Big] = 0.
  \end{equation}
Thus $\tau^*$ is an optimal stopping time for problem \eqref{eq:houseproblem}.
\end{lemma}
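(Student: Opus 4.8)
The plan is to verify the two hypotheses of Theorem \ref{thm:tau*opt}, namely $\Pop_i(\tau^*<\infty)=1$ and the limit condition \eqref{eq:houselimit}, since once these hold the optimality of $\tau^*$ follows immediately from that theorem. The key simplifying feature here is the observation stated just above the lemma: because of uniformization, the offers $Z_1,Z_2,\ldots$ are i.i.d. with $\Pop(Z_k=i)=\alpha_i/\alpha$, independent of the jump times, and in particular the top offer $m$ arrives at each jump with fixed positive probability $\alpha_m/\alpha>0$. Combined with $h^*(t,m)\equiv 0$ (we stop immediately when the maximal offer $m$ arrives), this should force $\tau^*$ to be finite almost surely.

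First I would establish $\Pop_i(\tau^*<\infty)=1$. The stopping time $\tau^*=(h^*,h^*,\ldots)$ stops no later than the first jump at which the chain enters state $m$, since $h^*(t,m)=0$ means we stop on the spot whenever an offer of $m$ is observed. Let $N:=\inf\{k\ge 1: Z_k=m\}$. Since the $Z_k$ are i.i.d. with $\Pop(Z_k=m)=\alpha_m/\alpha>0$, $N$ is geometrically distributed and hence finite $\Pop_i$-a.s., and $\tau^*\le S_N<\infty$ $\Pop_i$-a.s. because $S_N$ is a finite sum of integrable exponential holding times. This gives $\Pop_i(\tau^*<\infty)=1$.

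Next I would verify the limit \eqref{eq:houselimit}. The plan is to bound the integrand using the inequality $V(t,i)\le U(m-ct)$ from the observation. On the event $\{\tau^*\ge S_n\}$ we have not yet stopped, so in particular the chain has not visited state $m$ at any of the first $n$ jumps, i.e. $\{\tau^*\ge S_n\}\subseteq\{Z_1\neq m,\ldots,Z_{n-1}\neq m\}$; the probability of this event is at most $(1-\alpha_m/\alpha)^{n-1}\to 0$. Since $U$ is bounded above by the affine function $a x+b$, we have $V(S_n+t,Z_n)\le U(m-c(S_n+t))\le a\big(m-c(S_n+t)\big)+b$, which is bounded above by $am+b$. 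Thus the positive part of the integrand is controlled by a constant times $1_{\{\tau^*\ge S_n\}}$, whose expectation tends to $0$ geometrically, while I must also rule out the integrand drifting to $-\infty$ too slowly; here the cost term $-c(S_n+t)$ inside $U$ is dominated by the rapid decay of $\Pop_i(\tau^*\ge S_n)$, so a dominated-convergence or direct Cauchy–Schwarz estimate closes the argument.

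The main obstacle I anticipate is the lower-tail control in the limit \eqref{eq:houselimit}: because $V$ can take large negative values (for utilities unbounded below, $U(m-c(S_n+t))\to-\infty$ as $S_n\to\infty$), showing $\Eop_i[V(S_n+t,Z_n)1_{\{\tau^*\ge S_n\}}]\to 0$ requires that the geometric decay of $\Pop_i(\tau^*\ge S_n)$ beats the at-most-linear growth of $|U(m-c(S_n+t))|$ in $S_n$. The cleanest route is to note that on $\{\tau^*\ge S_n\}$ we also have $S_n$ concentrated (it is a sum of $n$ i.i.d. $\exp(\alpha)$ variables, so $S_n/n\to 1/\alpha$), and to split the expectation over $\{S_n\le Mn\}$ and its complement for large $M$; on the first set the integrand is bounded below by a linear-in-$n$ quantity times the geometrically small indicator expectation, and the second set has exponentially small probability by a standard large-deviations bound for $S_n$. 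Everything else is a routine application of Theorem \ref{thm:tau*opt}.
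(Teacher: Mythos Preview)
Your proof is correct and follows the same overall strategy as the paper: both verify the hypotheses of Theorem~\ref{thm:tau*opt} by exploiting $h^*(t,m)\equiv 0$, so that $\tau^*$ is bounded above by the hitting time of state $m$. The paper phrases this via stochastic ordering, $\tau^*\le_{st}\tau^m:=\inf\{t\ge 0:X_t=m\}$, and then invokes positive recurrence of $(X_t)$, whereas you work directly with the i.i.d.\ structure of the uniformized chain and the geometric law of $N=\inf\{k\ge 1:Z_k=m\}$; these are equivalent here and your route is arguably more explicit. For the limit \eqref{eq:houselimit} the paper simply cites the two-sided bound $U(1-ct)\le V(t,i)\le U(m-ct)$ together with $1_{\{\tau^*\ge S_n\}}\to 0$ a.s., while you go further and address the lower tail by playing the geometric decay $\Pop_i(\tau^*\ge S_n)\le(1-\alpha_m/\alpha)^{n-1}$ against the growth of $|U(m-c(S_n+t))|$. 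Your extra care is warranted when $U$ is unbounded below, and the Cauchy--Schwarz or splitting-plus-exponential-moment route you sketch is a valid way to make the step rigorous; the paper's proof leaves this point implicit.
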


\begin{proof}
Since $h^*(t,m)\equiv 0$ we obtain that $\tau^*\le_{st} \tau^m := \inf\{ t\ge 0 : X_t = m\}$ where $\le_{st}$ is the usual stochastic order. Since $(X_t)$ is positive recurrent we have that $\Pop_i(\tau^m <\infty)=1$ for all $i\in S$. Thus, the same is true for $\tau^*$. Since $V$ is bounded by $U(1-ct) \le V(t,i) \le U(m-ct)$ and $1_{\{\tau^* < S_n\}} \uparrow 1 \quad\P_i\text{--a.s.}$ equation \eqref{eq:houselimit} follows. Thus, the statement is a consequence of Theorem \ref{thm:tau*opt}.
\end{proof}

The next lemma further explores the structure of the optimal stopping time.

\begin{lemma}
For $i=1,\ldots,m-1$ and $t\ge 0$ we have that $h^*(t,i+1) \le h^*(t,i)$, i.e. the larger the offer, the earlier we will stop.
\end{lemma}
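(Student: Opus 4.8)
The plan is to exploit the defining feature of the uniformized house-selling model: because the offers $Z_1,Z_2,\dots$ are i.i.d., the continuation term in the optimality equation is one and the same function of $\vartheta$ for every current state. Writing $m_i(\vartheta):=U\big(i-c(t+\vartheta)\big)e^{-\alpha\vartheta}+G(\vartheta)$ with $G(\vartheta):=\int_0^{\vartheta}e^{-\alpha s}\sum_{j}\alpha_j V(t+s,j)\,ds$, the state-dependence sits entirely in the first term, so that $m_{i+1}(\vartheta)-m_i(\vartheta)=\big[U(i+1-c(t+\vartheta))-U(i-c(t+\vartheta))\big]e^{-\alpha\vartheta}=:\delta_i(\vartheta)e^{-\alpha\vartheta}$ depends only on the stopping reward. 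Since $h^*(t,i)$ and $h^*(t,i+1)$ are maximizers of $m_i$ and $m_{i+1}$ respectively, the whole statement reduces to a monotone-comparative-statics question: how does the maximizer of $m_i+\delta_i(\cdot)e^{-\alpha\cdot}$ move relative to that of $m_i$?

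First I would set $\vartheta_i:=h^*(t,i)$ and $\vartheta_{i+1}:=h^*(t,i+1)$ and run the standard interchange argument: optimality gives $m_i(\vartheta_i)\ge m_i(\vartheta_{i+1})$ and $m_{i+1}(\vartheta_{i+1})\ge m_{i+1}(\vartheta_i)$, and adding these yields $\delta_i(\vartheta_{i+1})e^{-\alpha\vartheta_{i+1}}\ge\delta_i(\vartheta_i)e^{-\alpha\vartheta_i}$. Thus, to force $\vartheta_{i+1}\le\vartheta_i$ it suffices that $\vartheta\mapsto\delta_i(\vartheta)e^{-\alpha\vartheta}$ be nonincreasing, or more weakly satisfy single crossing from above; this is exactly the Milgrom--Shannon condition under which the maximizer is nonincreasing in the offer. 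Equivalently, since $m_i$ is differentiable in $\vartheta$, I would work with the first-order condition, noting that $\tfrac{\partial}{\partial\vartheta}m_i(\vartheta)\le 0$ is equivalent to $\sum_j\tfrac{\alpha_j}{\alpha}V(t+\vartheta,j)\le U(i-c(t+\vartheta))+\tfrac{c}{\alpha}U'(i-c(t+\vartheta))$, and then show that raising $i$ enlarges the set of $\vartheta$ on which this holds, i.e.\ shrinks the continuation region.

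The hard part will be precisely the monotonicity of the increment $\delta_i(\vartheta)e^{-\alpha\vartheta}$. Differentiating, $\tfrac{d}{d\vartheta}\big(\delta_i(\vartheta)e^{-\alpha\vartheta}\big)\le 0$ is equivalent, with $y=i-c(t+\vartheta)$, to $c\big[U'(y)-U'(y+1)\big]\le\alpha\big[U(y+1)-U(y)\big]$. Concavity of $U$ is the decisive ingredient but, I expect, not by itself sufficient: $\delta_i$ is positive and increasing in $\vartheta$ because $x\mapsto U(x+1)-U(x)$ is decreasing, while the survival factor $e^{-\alpha\vartheta}$ damps it, and the content of the claim is that the damping wins, so that the extra value of holding the better offer over merely continuing shrinks as one waits. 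I would therefore establish this step by combining concavity with the consistency property of Lemma~\ref{lem:consist} and Remark~\ref{rem:consist}: that property shows $h^*(\cdot,i)$ is governed by a stopping boundary in absolute time (one waits in state $i$ exactly until this boundary is hit), which lets me compare the two policies at the boundary itself rather than at arbitrary $\vartheta$, localizing the single-crossing requirement to the marginal inequality at the stopping time.

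Once single crossing is in hand the conclusion is short: by Remark~\ref{rem:consist} the maximizer obeys $h^*(t+\delta,i)=h^*(t,i)-\delta$ on $\{h^*(t,i)\ge\delta\}$, so in each state one waits until a common boundary, and single crossing shows this boundary is reached no later for offer $i+1$ than for offer $i$, which is the assertion $h^*(t,i+1)\le h^*(t,i)$. As a sanity check I would verify the exponential case $U(x)=-e^{-\gamma x}$, where Theorem~\ref{thm:exp1} shows the optimal rule stops precisely on a set $S^0$ that is an up-set in the offer, making the monotonicity transparent. An alternative, fully inductive route would instead carry the inequality $h_n^*(t,i+1)\le h_n^*(t,i)$ through the finite-horizon recursion \eqref{eq:bellman-equation}, again driven by the state-independence of the continuation term, and then pass to the limit $n\to\infty$.
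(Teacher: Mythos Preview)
Your setup is exactly the paper's: both of you write
\[
m(t,i,\vartheta)=U\big(i-c(t+\vartheta)\big)e^{-\alpha\vartheta}+\int_0^{\vartheta}e^{-\alpha s}\sum_j\alpha_jV(t+s,j)\,ds,
\]
isolate $m(t,i+1,\vartheta)-m(t,i,\vartheta)=e^{-\alpha\vartheta}\big[U(i+1-c(t+\vartheta))-U(i-c(t+\vartheta))\big]$, and aim to conclude via an argmax comparison. The paper then simply asserts that concavity of $U$ makes this difference decreasing (the printed ``$i\mapsto$'' is presumably a slip for ``$\vartheta\mapsto$'', since monotonicity in $i$ says nothing about how the argmax moves) and stops.

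You are right to single this out as the crux, and your own differentiation shows the required inequality is $c[U'(y)-U'(y+1)]\le\alpha[U(y+1)-U(y)]$ with $y=i-c(t+\vartheta)$. You are also right that concavity alone does not force this: concavity makes $\delta_i(\vartheta)=U(y+1)-U(y)$ \emph{increase} in $\vartheta$, pulling against the damping $e^{-\alpha\vartheta}$, and for $U(x)=-e^{-\gamma x}$ the condition collapses to $c\gamma\le\alpha$, which can certainly fail. So the paper's one-line justification is at best incomplete, and your skepticism is well placed.

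The gap in your proposal is that you do not actually close this hole either. Your suggestion to ``combine concavity with the consistency property'' of Lemma~\ref{lem:consist}/Remark~\ref{rem:consist} in order to ``localize the single-crossing requirement to the marginal inequality at the stopping time'' is not a proof: consistency only says that $t+h^*(t,i)$ is constant in $t$ while $h^*(t,i)>0$, and it is not clear how that converts the two optimality inequalities from your interchange argument into the desired ordering without precisely the monotonicity of $\vartheta\mapsto\delta_i(\vartheta)e^{-\alpha\vartheta}$ that you cannot establish. The inductive finite-horizon route you sketch at the end faces the identical obstruction at every step of the recursion. In short, your approach coincides with the paper's up to the decisive step, you correctly diagnose a difficulty the paper glosses over, but you do not resolve it; a complete argument would need either an extra hypothesis linking $c$, $\alpha$ and the curvature of $U$, or a genuinely different route that avoids global monotonicity of the increment.
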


\begin{proof}
Let us define
$$ m(t,i,\vartheta) := U\big(i-c(t+\vartheta) \big)  e^{-\alpha\vartheta} + \int_0^{\vartheta}e^{-\alpha s} \sum_{j} \alpha_{j} V(t+s,j) \;ds.$$
By definition we have $h^*(t,i) = argmax_{\vartheta\ge 0} m(t,i,\vartheta)$. Now obviously
$$m(t,i+1,\vartheta) = m(t,i,\vartheta)+ \big( m(t,i+1,\vartheta)-m(t,i,\vartheta)\big)$$
where
$$ m(t,i+1,\vartheta)-m(t,i,\vartheta) = e^{-\alpha \vartheta}\big(U(i+1-c(t+\vartheta))-U(i-c(t+\vartheta) \big).$$
Since $U$ is concave $i\mapsto m(t,i+1,\vartheta)-m(t,i,\vartheta)$ is decreasing. Thus, the maximum point $h^*(t,i+1)$ of $\vartheta\mapsto m(t,i+1,\vartheta)$ has to satisfy $h^*(t,i+1) \le h^*(t,i)$.
\end{proof}

\bibliographystyle{plainnat}

\end{document}